   \edef\Gin@extensions{\Gin@extensions,.mps}
\newtheorem{theorem}{Theorem}[section]
\newtheorem{lemma}[theorem]{Lemma}
\newtheorem{proposition}[theorem]{Proposition}
\newtheorem{conjecture}[theorem]{Conjecture}
\theoremstyle{definition}
\newtheorem{definition}[theorem]{Definition}
\newtheorem{remark}[theorem]{Remark}
\definecolor{DarkBlue}{rgb}{0,0.1,0.55}
\numberwithin{equation}{section}
\newcommand {\hide}[1]{}
\newcommand {\junk}[1]{}
\newcommand {\R} {\mathbb{R}}
\newcommand {\C}     {\mathbb{C}}
\newcommand {\Z}  {\mathbb{Z}}
 \newcommand {\N}         {\mathbb{N}}
\newcommand {\Q}         {\mathbb{Q}}
\newcommand {\eps} {{\varepsilon}}
\newcommand {\PP}     {\mathbb{P}} 
\newcommand{\Gr}{\mathrm{Gr}}
\def\addots{\mathinner{\mkern1mu
\raise1pt\vbox{\kern7pt\hbox{.}}
\mkern2mu\raise4pt\hbox{.}\mkern2mu
\raise7pt\hbox{.}\mkern1mu}}
\newcommand{\Cc}{\operatorname{cc}}
\def\ov#1{{\overline{#1}}}
\def\wt#1{{\widetilde{#1}}}
\newcommand{\Hf}{{\operatorname{H}}}
\newcommand{\Hp}{{\operatorname{P}}}
\newcommand{\aff}{{\operatorname{aff}}}
\newcommand{\card}{\operatorname{card}}
\newcommand{\cc}{\operatorname{cc}}
\newcommand{\irr}{\operatorname{irr}}
\newcommand{\bad}{{\rm B}}
\newcommand{\good}{{\rm G}}
\newcommand{\betti}{\operatorname{b_{0}}}
\newcommand{\ball}{\operatorname{B}}
\newcommand{\lin}{\operatorname{lin}}
\newcommand{\cC}{{\mathcal C}}
\newcommand{\cG}{{\mathcal G}}
\newcommand{\cH}{{\mathcal H}}
\newcommand{\cI}{{\mathcal I}}
\newcommand{\cL}{{\mathcal L}}
\newcommand{\cP}{{\mathcal P}}
\newcommand{\cQ}{{\mathcal Q}}
\newcommand{\cR}{{\mathcal R}}
\newcommand{\cS}{{\mathcal S}}
\newcommand{\cV}{{\mathcal V}}
\newcommand{\bfa}{{\boldsymbol{a}}}
\newcommand{\bfb}{{\boldsymbol{b}}}
\newcommand{\bfp}{{\boldsymbol{p}}}
\newcommand{\bfw}{{\boldsymbol{w}}}
\newcommand{\bfx}{{\boldsymbol{x}}}
\newcommand{\bfz}{{\boldsymbol{z}}}
\begin{document}
\title[Partitioning on varieties and point-hypersurface
incidences]{Polynomial partitioning on varieties of codimension two and
  point-hypersurface incidences in four dimensions}

\author[Basu]{Saugata Basu}
\address{Department of Mathematics,
Purdue University. West Lafayette, IN 47906, U.S.A.}
\email{sbasu@math.purdue.edu}
\urladdr{\url{http://www.math.purdue.edu/~sbasu}}

\author[Sombra]{Mart\'\i n Sombra}
\address{ICREA \& Departament d'{\`A}lgebra i Geometria, Universitat
  de Barcelona.  Gran Via 585, 08007 Bar\-ce\-lo\-na, Spain}
\email{sombra@ub.edu}
\urladdr{\url{http://atlas.mat.ub.es/personals/sombra}}

\date{\today} \subjclass[2010]{Primary 52C10; Secondary 13D40, 14P25.}
\keywords{Polynomial partitioning, Hilbert functions, connected
  components of semi-algebraic sets, point-hypersurface incidences}
\thanks{Basu and Sombra were partially supported by the IPAM research
  program ``Algebraic Techniques for Combinatorial and Computational
  Geometry''. Basu was also  partially supported by NSF grants
  CCF-0915954,  CCF-1319080 and DMS-1161629.  Sombra was also partially supported by the MINECO
  research project MTM2012-38122-C03-02. 
  }

\begin{abstract}
  We present a polynomial partitioning theorem for finite sets of
  points in the real locus of 
  an irreducible
  complex algebraic variety of
  codimension at most two. This result
  generalizes the polynomial partitioning theorem on the Euclidean
  space of Guth and Katz, and its extension to hypersurfaces by Zahl
  and by Kaplan, Matou{\v{s}}ek, Sharir and Safernov\'a.

  We also present a bound for the number of incidences between points
  and hypersurfaces in the four-dimensional Euclidean space. It is an
  application of our partitioning theorem together with the refined
  bounds for the number of connected components of a semi-algebraic
  set by Barone and Basu.
\end{abstract}

\maketitle

\vspace{-8mm}
\setcounter{tocdepth}{1}
\tableofcontents
\vspace{-8mm}

\section[Introduction]{Introduction}

The polynomial partitioning method was introduced by Guth and Katz in
their seminal paper \cite{GK:eddpp}. Applying it in conjunction with
the Elekes' framework~\cite{Elekes-Sharir11}, they made a breakthrough
in a long-standing problem of Erd{\H o}s on the number of distinct
distances between points in the plane, by nearly proving the distinct
distances conjecture. Subsequently, this method has been applied to
produce other new results and simpler proofs of known results in
discrete geometry, see for
instance~\cite{KMS:spctdggkppt,Solymosi-Tao}.

The Guth-Katz polynomial partitioning method gives a nonlinear
decomposition of the Euclidean space, which plays a role analogous to
cuttings or trapezoidal decompositions in the more classical
Clarkson-Shor type divide-and-conquer arguments for such problems, see
for instance \cite{CEGSW}.

It can be summarized in the result below.  For a polynomial $g\in
\R[x_{1},\dots, x_{d}] $, we denote by $V(g)$ its zero zet in
$\C^{d}$ and, for a finite set $\cQ$, we denote by $\card(\cQ)$ its
cardinality.

\begin{theorem}[Guth and Katz \cite{GK:eddpp}]
\label{thm:Guth-Katz}
Let $d\ge 1$ and $\mathcal{P} \subset \R^d$  be a finite subset. Given $\ell\ge 1$,
there is a nonzero polynomial $g \in \R[x_{1},\dots, x_{d}] $ of
degree bounded by $\ell$ such that, for each connected component $C$
of $\R^d \setminus V(g)$,
\begin{equation*}
  \card (\mathcal{P} \cap C)  = O_{d}\Big(
  \frac{\card(\mathcal{P})}{\ell^{d}}\Big),  
\end{equation*}
where the implicit constant in the $O$-notation depends only on $d$. 
\end{theorem}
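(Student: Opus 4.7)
\medskip

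\noindent\textbf{Proof proposal.} The plan is to adapt the standard argument of Guth and Katz, which proceeds by iterated application of the polynomial ham-sandwich theorem. The core tool I would establish first is the \emph{polynomial ham-sandwich theorem}: for any collection of $m$ finite subsets $\mathcal{P}_{1},\dots,\mathcal{P}_{m} \subset \R^{d}$, there exists a nonzero polynomial $h \in \R[x_{1},\dots,x_{d}]$ of degree $O(m^{1/d})$ that bisects each $\mathcal{P}_{i}$, in the sense that $h>0$ on at most half of each $\mathcal{P}_{i}$ and $h<0$ on at most half of each $\mathcal{P}_{i}$. The proof of this is the only nontrivial topological ingredient, and it is obtained via the Borsuk--Ulam theorem composed with the Veronese embedding: choose $D$ so that $\binom{D+d}{d}-1 \ge m$, identify polynomials of degree $\le D$ up to scaling with a sphere $S^{N}$, define an odd continuous map $S^{N}\to \R^{m}$ sending a polynomial to its signed mass imbalance on each $\mathcal{P}_{i}$, and invoke Borsuk--Ulam to produce a zero. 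Taking $D = O(m^{1/d})$ suffices.

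Once ham-sandwich is in hand, the strategy is a dyadic cascade. First I would fix $r$ such that $2^{r}$ is of the order of $\ell^{d}$. Then I construct polynomials $g_{1}, g_{2},\dots, g_{r}$ inductively as follows. Let $g_{1}$ be a nonzero polynomial of degree at most $1$ that bisects $\mathcal{P}$, cutting $\R^{d}\setminus V(g_{1})$ into open sets whose intersections with $\mathcal{P}$ have size at most $\card(\mathcal{P})/2$. At stage $j$, the polynomial $g_{1}\cdots g_{j-1}$ produces at most $2^{j-1}$ sign-cells; within these cells the set $\mathcal{P}$ is partitioned into at most $2^{j-1}$ pieces, and I apply polynomial ham-sandwich to these pieces to obtain a polynomial $g_{j}$ of degree $O(2^{(j-1)/d})$ that bisects each of them simultaneously. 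After $r$ stages, each sign-cell of the product contains at most $\card(\mathcal{P})/2^{r} = O(\card(\mathcal{P})/\ell^{d})$ points of $\mathcal{P}$.

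Finally, I set $g = g_{1}g_{2}\cdots g_{r}$ and bound its degree. Since $\deg(g_{j}) = O(2^{(j-1)/d})$, the total degree is controlled by a geometric sum
\begin{equation*}
\deg(g) \;=\; \sum_{j=1}^{r} O\bigl(2^{(j-1)/d}\bigr) \;=\; O\bigl(2^{r/d}\bigr) \;=\; O(\ell),
\end{equation*}
because the sum is dominated by its last term (the ratio of consecutive terms is the constant $2^{1/d}>1$). Each connected component $C$ of $\R^{d}\setminus V(g)$ lies inside a single sign-cell of $g_{1}\cdots g_{r}$, so $\card(\mathcal{P}\cap C) = O_{d}(\card(\mathcal{P})/\ell^{d})$, which is the desired bound.

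The part I expect to be the main obstacle is the clean verification of polynomial ham-sandwich with the right degree bound: one must be careful that the Veronese-plus-Borsuk--Ulam argument actually provides a polynomial whose zero set bisects in the strict sense (half strictly on each side), and one must handle the possibility that many points lie on $V(g_{j})$ itself (which is harmless because such points are removed when passing to $\R^{d}\setminus V(g)$, and only the non-vanishing halves are counted). Bookkeeping the constants across the recursion so that the implicit $O_{d}$-constant depends only on $d$ is routine but needs attention.
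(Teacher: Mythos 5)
Your proposal is correct and follows essentially the same route as the original Guth--Katz argument, which the paper only cites (Theorem \ref{thm:Guth-Katz} is not reproved here) and then generalizes in the proof of Theorem \ref{thm:polynomial-partitioning} by the very same scheme: Stone--Tukey polynomial ham-sandwich via Borsuk--Ulam composed with the Veronese embedding, followed by a dyadic cascade of simultaneous bisections and a geometric-series degree count. The only point to tighten is that your conclusion $\deg(g)=O(\ell)$ must be converted into the stated bound $\deg(g)\le \ell$ by choosing the number of stages $r$ so that $2^{r}\ge c_{d}\,\ell^{d}$ with the constant $c_{d}$ dictated by the geometric sum, the loss being absorbed into the $O_{d}$ constant in the point count, exactly the bookkeeping you flag as routine.
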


When applying this result in a concrete situation, one needs to
couple it with a suitable bound for the number of connected components
of the semi-algebraic set $\R^d \setminus V(g)$. This is provided by
the classical works of Ole{\u\i}nik, Petrovski{\u\i}, Milnor and Thom
on the Betti numbers of semi-algebraic varieties \cite{OP,Milnor2,T},
which allow to treat the points in $\cP$ outside the hypersurface
$V(g)$.

However, it is possible that many, or even all, of the points in
$\mathcal{P}$ are contained in this hypersurface. The points in
$\mathcal{P} \cap V(g)$ are not partitioned, and a separate argument
is needed for handling them.  The natural approach would be to apply a
polynomial partitioning theorem on $V(g)$ together with a suitable
bound for the number of connected components of the resulting
partition. After this step, it 
is 
also possible that many of the points
in $\mathcal{P} \cap V(g)$ are contained in the partitioning variety of
codimension 2. Then one would like to apply a partitioning theorem on this
variety, and so on.

To make this strategy work efficiently, one needs a polynomial
partitioning theorem on varieties.  For hypersurfaces, such a result
has been achieved independently by Zahl \cite{Zahl:ibnpsitd} and by
Kaplan, Matou{\v{s}}ek, Sharir and Safernov\'a \cite{KMSS:udtd}, and
applied to  incidence problems in $\R^{3}$. Extending it to
varieties of arbitrary codimension has been identified as a major
obstacle to apply the polynomial partitioning method to incidence
problems in dimension
$d\ge 4$, see for instance the discussion in \cite[\S 3]{KMSS:udtd}.
Our main objective in this paper is to present such a
result for irreducible varieties of codimension two. 

Given an irreducible algebraic variety $X\subset \C^{d}$ we denote by
$\dim(X)$ and $\deg(X)$ its dimension and degree, respectively.  We
also denote by $\delta(X)$ the minimal integer $\delta\ge1$ such that
$X$ is an irreducible component of the zero set of a family of
polynomials of degree bounded by $\delta$. These invariants are
related by the inequalities~(Lemma \ref{lemm:2})
\begin{displaymath}
  \delta(X)\le \deg(X) \le \delta(X)^{d-\dim(X)}.
\end{displaymath}

The following is a simplified version of our polynomial
partitioning theorem (Theorem \ref{thm:polynomial-partitioning}). 

\begin{theorem}
\label{thm:3} 
Let $d\ge 1$ and  $X \subset \C^d$ an irreducible variety of
codimension at most two. Let $\mathcal{P} \subset \R^{d}\cap X$ be a
finite subset and $\ell \geq 6\, 
d \,\delta(X)$.  Then there is a polynomial $g \in \R[x_{1},\dots,
x_{d}]$ of degree bounded by $\ell$ with $\dim(X\cap V(g)) =\dim(X)-1$
such that, for each connected component $C$ of $\R^{d}\setminus V(g)$,
\begin{equation*}
  \card( \mathcal{P} \cap C)  = O_{d}\Big(
  \frac{\card(\mathcal{P})}{\deg(X) \ell^{\dim(X)}}\Big).  
\end{equation*}
\end{theorem}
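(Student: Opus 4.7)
The approach is iterated polynomial ham--sandwich, but carried out inside a vector-space complement of the ideal of $X$ so that every bisector is forced to avoid vanishing on $X$.

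The polynomial ham--sandwich theorem of Stone--Tukey asserts that if $W\subset \R[x_{1},\dots,x_{d}]$ is a finite-dimensional linear subspace and $A_{1},\dots,A_{N}$ are pairwise disjoint finite subsets of $\R^{d}$ with $N<\dim W$, then there is a nonzero $g\in W$ whose zero set bisects each $A_{i}$. I would apply this not to the full space of polynomials of degree $\le m$, but to a fixed linear complement $W_{m}$ of $I(X)_{\le m}$ inside $\R[x_{1},\dots,x_{d}]_{\le m}$. By construction $\dim W_{m}=H_{X}(m)$, the Hilbert function of the coordinate ring of $X$ evaluated at~$m$, and every nonzero element of $W_{m}$ lies outside $I(X)$; since $X$ is irreducible, this forces $\dim(X\cap V(g))=\dim(X)-1$.

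Iterating produces the partitioning polynomial. At step $k$ there are $2^{k-1}$ subsets from the previous step; I select $g_{k}\in W_{m_{k}}$ with $m_{k}$ minimal subject to $H_{X}(m_{k})>2^{k-1}$, so that $g_{k}$ simultaneously bisects all of them. After $r$ steps, the product $g=g_{1}\cdots g_{r}$ has degree $\sum_{k}m_{k}$ and partitions $\mathcal{P}$ into $2^{r}$ parts, each of size at most $\card(\mathcal{P})/2^{r}$, and since $X$ is irreducible no factor lies in $I(X)$, so $\dim(X\cap V(g))=\dim(X)-1$. Granted a Hilbert-function lower bound of the form $H_{X}(m)\gtrsim \deg(X)\,m^{\dim(X)}$ for $m\gtrsim \delta(X)$, the geometric choice $m_{k}\simeq (2^{k}/\deg(X))^{1/\dim(X)}$ makes each bisection feasible while forcing $\sum_{k}m_{k}=O(m_{r})$. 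Setting $2^{r}\simeq \deg(X)\,\ell^{\dim(X)}$ then yields $\sum_{k}m_{k}\lesssim \ell$ together with cells of the desired size $O(\card(\mathcal{P})/(\deg(X)\,\ell^{\dim(X)}))$; the hypothesis $\ell\ge 6\,d\,\delta(X)$ supplies exactly the room needed to keep every $m_{k}$ above the regularity threshold at which the Hilbert bound holds.

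The decisive step, and the main obstacle, is the effective Hilbert-function estimate $H_{X}(m)\gtrsim \deg(X)\,m^{\dim(X)}$ valid for $m\gtrsim \delta(X)$, uniformly in $X$. For hypersurfaces it follows at once from the exact sequence $0\to \R[x]_{\le m-\deg(X)}\to \R[x]_{\le m}\to \R[x]_{\le m}/(f)_{\le m}\to 0$, which is precisely what Zahl and Kaplan--Matou{\v{s}}ek--Sharir--Safernov\'a exploit. In codimension two no single defining equation is available, and one must instead control the Castelnuovo--Mumford regularity of $X$ in terms of $\delta(X)$ rather than $\deg(X)$, combined with the inequalities $\delta(X)\le\deg(X)\le\delta(X)^{d-e}$ recalled above. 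This is the point at which the codimension-at-most-two hypothesis becomes essential, and beyond which the method breaks down, as already noted in \cite[\S 3]{KMSS:udtd}.
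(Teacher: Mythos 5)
Your ham--sandwich mechanism is fine, and working inside a linear complement $W_m$ of $I(X)_{\le m}$ is a legitimate variant of the paper's Veronese/affine-hull argument (it even sidesteps the case distinction the paper has to make between $I(\iota(X(\R)))$ and $I(\iota(X))$). But the proof is incomplete exactly at the point you yourself call ``the decisive step'': you \emph{assume} the lower bound $H_X(m)\gtrsim \deg(X)\,m^{\dim X}$ for $m\gtrsim\delta(X)$, and your only gesture toward proving it is that one should ``control the Castelnuovo--Mumford regularity of $X$ in terms of $\delta(X)$'', with no argument (regularity bounds in terms of degrees of defining equations are in general doubly exponential, so this is not a routine reduction). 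This estimate is precisely the new ingredient of the paper: it follows from the Chardin--Philippon lower bound for Hilbert functions (Theorem \ref{thm:1}\eqref{item:3}), combined with Lemma \ref{lem:codim2} ($\deg(X)\le\delta_1\delta_2$ in codimension two) to convert $\delta_1\delta_2$ into $\deg(X)$. Since your closing paragraph concedes you do not know how to establish it, the argument has a genuine gap at its crux.

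There is a second, quantitative problem in the degree bookkeeping even if that bound is granted. You propose to ``keep every $m_k$ above the regularity threshold'' $\simeq\delta(X)$ and take $2^r\simeq\deg(X)\,\ell^{\dim X}$, hence $r=\Theta\bigl(\log(\deg(X)\,\ell^{\dim X})\bigr)$ rounds; then $\sum_k m_k\gtrsim \delta(X)\log\bigl(\deg(X)\,\ell^{\dim X}\bigr)$, which exceeds $\ell$ when $\ell=6d\,\delta(X)$ and $\delta(X)$ is large, so the resulting $g$ violates the degree bound. The alternative reading, ``take $m_k$ minimal with $H_X(m_k)>2^{k-1}$'', forces the early bisectors to have degree below $\delta(X)$, and then the claim $\sum_k m_k=O(m_r)$ needs lower bounds for $H_X$ \emph{below} the threshold with larger exponents; your single-regime bound says nothing there. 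This is why the paper proves the three-regime estimate of Proposition \ref{prop:2} (exponent $d$ for $\ell<\delta_1$, exponent $d-1$ for $\delta_1\le\ell<\delta_2$, exponent $d-2$ for $\ell\ge\delta_2$): it makes the degrees used within each regime grow geometrically, so the total degree is $O(\delta_1)+O(\delta_2)+O(\eta)\le 4d\,\delta_2+2d\,\eta\le\ell$. Without an analogue of that multi-regime bound, your construction does not deliver $\deg(g)\le\ell$ under the stated hypothesis $\ell\ge 6d\,\delta(X)$.
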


When $X=\C^{d}$, the invariant $\delta(X)$ is equal to 1 whereas,  when
$X$ is a hypersurface, it coincides with $\deg(X)$. Hence,
Theorem \ref{thm:3} reduces in these cases to Theorem
\ref{thm:Guth-Katz} and to the polynomial partitioning theorems in
\cite{Zahl:ibnpsitd, KMSS:udtd}, respectively.

As for the Guth-Katz theorem, the proof of this result is based on the
ham sandwich theorem obtained by Stone and Tukey from the Borsuk-Ulam
theorem. The new key ingredient is the systematic use of the upper and
lower bounds for Hilbert functions due to Chardin
\cite{Chardin:mfhcia} and Chardin and Philippon \cite{CP:ri}.

\begin{remark}
  \label{rem:2}
  The polynomial partition method also applies to problems in
  computational geometry, in particular to range searching with
  semi-algebraic sets.  Concurrently with this paper, Matou{\v{s}}ek
  and Pat{\'a}kov{\'a} have also obtained a polynomial partitioning theorem
on varieties \cite[Theorem 1.1]{MS:mppsrs}, focused on obtaining
   efficient range searching algorithms. 

   For irreducible varieties of codimension two, the
   Matou{\v{s}}ek-Pat{\'a}kov{\'a} partitioning theorem is
   quantitatively
   weaker than ours.  On the other hand, this result
   holds in a  more  general setting, since it can be applied to
   non-necessarily irreducible varieties of arbitrary dimension. This greater
   generality is important for their application to range searching.
\end{remark}

As a test case for Theorem \ref{thm:3}, we
consider the problem of bounding the number of point-hypersurface
incidences. Given a set  $\mathcal{P}$ of points of $\R^d$ and a set
$\cV$ of subvarieties of $\R^d$ or of $\C^{d}$, we denote by $I(\cP,\cV)$ their
number of incidences, that is, the number of pairs $(p,V)\in \cP\times\cV$ with  $p\in V$.

The following fundamental result was proved by Szemer\'edi and Trotter
in 1983, in response to a problem of Erd{\H o}s.

\begin{theorem}[Szemer\'edi and Trotter \cite{Szemeredi-Trotter}]
\label{thm:szemeredi-trotter}
Let $\cP$ be a set of $m$ points of $\R^{2}$ and $\cL$ a set of $n$
lines in $\R^2$. Then
\[
I(\cP,\cL)= O(m^{\frac23}n^{\frac23} + m + n).
\]
\end{theorem}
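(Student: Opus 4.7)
The plan is to deduce this from Theorem \ref{thm:Guth-Katz} (the Guth--Katz polynomial partitioning theorem) in the case $d=2$, which would also serve as a warm-up for the strategy the paper develops in higher dimensions. I would first handle the degenerate regimes: a standard pair-counting argument---two distinct lines meet in at most one point, so $\sum_{p\in\cP}\binom{d_p}{2}\le\binom{n}{2}$, where $d_p$ is the number of lines through $p$---yields by Cauchy--Schwarz the trivial bounds $I(\cP,\cL)=O(m^{1/2}n+m)$ and $I(\cP,\cL)=O(n^{1/2}m+n)$. These already give the claim outside the range $m^{1/2}\le n\le m^2$, so we may assume we are inside that range and set $\ell:=\lfloor (m^{2}/n)^{1/3}\rfloor\ge 1$.

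Next I would apply Theorem \ref{thm:Guth-Katz} with this value of $\ell$ to obtain $g\in\R[x_1,x_2]$ of degree at most $\ell$ such that every connected component $C$ of $\R^2\setminus V(g)$ satisfies $\card(\cP\cap C)=O(m/\ell^2)$, and I would invoke the classical Milnor--Thom bound to get $O(\ell^2)$ such cells. Then split the incidences as $I(\cP,\cL)=I_1+I_2$, where $I_2$ counts incidences at points in cells and $I_1$ counts incidences at points on $V(g)$.

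For $I_2$, note that a line not contained in $V(g)$ meets $V(g)$ in at most $\ell$ points and thus enters at most $\ell+1$ cells, while a line contained in $V(g)$ contributes nothing to $I_2$. Inside each cell $C$ with $m_C$ points and $n_C$ crossing lines, the trivial bound $I_C=O(m_C^2+n_C)$ (again from the fact that two points determine at most one line) applies. Using $\sum_C n_C\le (\ell+1)n$ and $\sum_C m_C^2\le(\max_C m_C)(\sum_C m_C)\le (m/\ell^2)\,m$, one gets $I_2=O(m^2/\ell^2+n\ell)$. For $I_1$, lines not in $V(g)$ again contribute at most $n\ell$. The remaining lines---the ones contained in $V(g)$---are at most $\ell$ in number, since $g$ factors into at most $\ell$ irreducibles; their contribution to $I_1$ is bounded by the trivial $O(m^{1/2}\ell+m)$.

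Summing and substituting $\ell\sim(m^2/n)^{1/3}$ balances the two main terms to $O(m^{2/3}n^{2/3})$, while the residual term $m^{1/2}\ell=O(m^{7/6}/n^{1/3})$ is dominated by $m^{2/3}n^{2/3}$ precisely in the retained regime $n\ge m^{1/2}$, so the claimed bound follows. The step I expect to be the main obstacle is the treatment of lines contained in the partitioning curve $V(g)$: these do not enjoy the useful ``at most $\ell$ intersections with $V(g)$'' bound, and controlling their contribution is what forces the separate trivial-bound argument inside $I_1$ and ultimately fixes the regime in which the partition balances. This is in fact the prototype of the difficulty that the paper's Theorem \ref{thm:3} addresses for higher-dimensional and higher-codimensional analogues.
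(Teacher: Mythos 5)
Your argument is correct, but note that the paper does not prove this statement at all: Theorem \ref{thm:szemeredi-trotter} is quoted as a classical result with a citation to Szemer\'edi and Trotter, whose original argument (like Sz\'ekely's crossing-number proof or the Clarkson--Edelsbrunner--Guibas--Sharir--Welzl cutting proof) proceeds quite differently. What you give is the modern polynomial-partitioning proof, i.e.\ exactly the $d=2$ instance of the machinery the paper is built around: Theorem \ref{thm:Guth-Katz} with $\ell\sim(m^2/n)^{1/3}$, the B\'ezout bound that a line not in $V(g)$ crosses at most $\ell+1$ cells, the trivial bound $I_C\le n_C+m_C^2$ inside each cell, and a separate treatment of the at most $\ell$ lines contained in $V(g)$ together with the points on $V(g)$. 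All the estimates check out: the two trivial Cauchy--Schwarz bounds dispose of the regimes $n\le m^{1/2}$ and $n\ge m^2$, the cell count is not even needed since you use $\sum_C m_C^2\le(\max_C m_C)\sum_C m_C$, and the leftover term $m^{1/2}\ell=O(m^{7/6}n^{-1/3})$ is indeed absorbed by $m^{2/3}n^{2/3}$ once $n\ge m^{1/2}$. One small attribution slip: the bound $I=O(n^{1/2}m+n)$ and the in-cell bound $O(m_C^2+n_C)$ come from the dual fact that two points determine at most one line ($\sum_L\binom{p_L}{2}\le\binom{m}{2}$), not from the fact that two lines meet in at most one point; this is cosmetic and does not affect correctness. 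Your closing observation is also apt: the lines lying inside the partitioning curve are precisely the low-dimensional prototype of the ``points trapped in the zero set'' difficulty that Theorem \ref{thm:3} and the multi-level partitioning of Section \ref{sec:proof-theor-refthm:m} are designed to handle; in two dimensions they are cheap to handle by the trivial bound, which is why your proof closes, whereas in higher dimensions one needs partitioning on the variety itself.
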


This theorem has led to an extensive study of
incidences of points and curves in the plane, and of points and
varieties in higher dimensions.  In particular, it was extended by
Pach and Sharir to incidences between points in the plane and curves
having a bounded degree of freedom \cite{Pach-Sharir}. Later on, Zahl
obtained an analogous result for the incidences between points in
$\R^{3}$ and algebraic surfaces having a bounded degree of freedom
\cite{Zahl:ibnpsitd}. A similar result was independently obtained by
Kaplan, Matou{\v{s}}ek, Sharir and Safernov\'a for the incidences
between points in $\R^{3}$ and unit spheres \cite{KMSS:udtd}.

We present the following bound for the number of incidences between points in
$\R^{4}$ and threefolds.

\begin{theorem}
\label{thm:main}
Given  $k,c \geq 1$,  let  $\mathcal{P}$ be a finite set of points of $\R^{4}$ and
$\cH$  a finite set of hypersurfaces of $\C^4$ satisfying the following conditions:
\begin{enumerate}
\item \label{item:4} the degrees of the hypersurfaces in $\cH$ are bounded
  by $c$; 
\item \label{item:5} the intersection of any 
four distinct
  hypersurfaces in $\cH$ is finite;
\item \label{item:6} for any subset of $k$ distinct points in $\cP$, the
  number of hypersurfaces in $\cH$ containing them is bounded by $c$.
\end{enumerate}
Set  $m=\card(\mathcal{P})$ and $n=\card(\cH)$. Then
\[
I(\mathcal{P},\cH) = O_
{k,c}(m^{1-\frac{k-1}{4k-1}} n^{1 - \frac{3}{4 k-1}} +m +n).
\]
\end{theorem}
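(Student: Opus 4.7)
The plan is to apply a three-tiered polynomial partitioning argument, using Theorem~\ref{thm:Guth-Katz} in the first tier and Theorem~\ref{thm:3} at codimensions one and two in the next two tiers, terminating the recursion on curves via conditions~(\ref{item:5}) and~(\ref{item:6}). Fix parameters $\ell_0, \ell_1, \ell_2 \geq 1$ to be optimized at the end.

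In tier~$0$, apply Theorem~\ref{thm:Guth-Katz} to $\mathcal{P}$ with degree~$\ell_0$ to obtain a polynomial $g_0 \in \R[x_1,\dots,x_4]$ such that each connected component $C$ of $\R^4 \setminus V(g_0)$ contains $O(m/\ell_0^4)$ points of $\mathcal{P}$. In tier~$1$, for each irreducible component $X_1$ of $V(g_0)$, apply Theorem~\ref{thm:3} with $X = X_1$ to obtain a polynomial $g_1$ of degree $\le \ell_1$ whose cells contain $O(m/(\deg(X_1)\,\ell_1^3))$ points of $\mathcal{P} \cap X_1$ each. In tier~$2$, for each irreducible codimension-two component $X_2$ of $V(g_0) \cap V(g_1)$, apply Theorem~\ref{thm:3} again to produce $g_2$ of degree $\le \ell_2$ with cells containing $O(m/(\deg(X_2)\,\ell_2^2))$ points of $\mathcal{P} \cap X_2$. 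After these three tiers, the residual incidences involve points on the one-dimensional variety $X_2 \cap V(g_2)$.

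The incidence contribution of each tier is estimated as follows. At tier~$i$, split $\cH$ into hypersurfaces contained in the ambient partitioning variety (deferred to subsequent tiers or to the one-dimensional residue) and hypersurfaces not contained in it. For the latter, the refined bounds of Barone--Basu on the number of connected components of a semi-algebraic set imply that each such hypersurface of degree $\le c$ meets at most $O_c(\ell_0^3)$, $O_c(\deg(X_1)\,\ell_1^2)$, and $O_c(\deg(X_2)\,\ell_2)$ cells in tiers $0$, $1$, and $2$ respectively. Within each cell, condition~(\ref{item:6}) makes the incidence bipartite graph $K_{k,c+1}$-free, so the K\H{o}v\'ari--S\'os--Tur\'an theorem bounds the local incidences by $O_c(|\mathcal{P}_C|^{1-1/k}|\mathcal{H}_C| + |\mathcal{H}_C|)$. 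Summing these per-cell bounds against the cell-crossing counts produces an explicit estimate in terms of $m$, $n$, and the $\ell_i$.

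The terminal case concerns points on an irreducible curve $Y$ of degree $O(\ell_0 \ell_1 \ell_2)$. By condition~(\ref{item:5}), at most three hypersurfaces of $\cH$ can contain $Y$, and every other hypersurface meets $Y$ in $O_c(\deg(Y))$ points by Bezout; the residual incidences are then controlled elementarily, again via condition~(\ref{item:6}). Finally, we balance $\ell_0 \asymp \ell_1 \asymp \ell_2$ to equalize the dominant contributions, which yields the exponents $3k/(4k-1)$ and $4(k-1)/(4k-1)$ of the main term; the additive $m+n$ arises from the degenerate parameter range in which this balancing would force $\ell_i < 1$, handled by a plain K\H{o}v\'ari--S\'os--Tur\'an bound. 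The principal obstacle is the bookkeeping at each tier for the hypersurfaces absorbed into successive partitioning varieties: showing that their contribution does not exceed the main term requires careful use of condition~(\ref{item:6}) together with the degree inequalities $\delta(X) \le \deg(X) \le \delta(X)^{d-e}$ recalled before Theorem~\ref{thm:3}.
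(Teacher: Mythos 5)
Your architecture is essentially the paper's: three levels of partitioning (on $\R^4$, then on the irreducible components of the first partitioning hypersurface, then on the codimension-two components of the second-level intersections), crossing-number bookkeeping via Barone--Basu, and a terminal B\'ezout argument on curves using conditions (\ref{item:5}) and (\ref{item:6}). Where you deviate is in the per-cell accounting (K\H{o}v\'ari--S\'os--Tur\'an instead of the paper's split into hypersurfaces with at least $k$ and fewer than $k$ points per cell) and in the balancing (uniform $\ell_0\asymp\ell_1\asymp\ell_2$ instead of the paper's adaptive per-component degrees $E_i$, $F_{i,j}$ summed with H\"older). These variants are workable in principle, but note two points: your KST bound is stated with the exponents on the wrong sides --- forbidding $k$ points on $c+1$ common hypersurfaces gives $O_{c,k}(|\cP_C|\,|\cH_C|^{1-1/k}+|\cH_C|)$, not $O_c(|\cP_C|^{1-1/k}|\cH_C|+|\cH_C|)$, and only the correct orientation balances to the exponents $3k/(4k-1)$ and $4(k-1)/(4k-1)$; and be aware that the uniform choice of degrees only has a chance because of the KST accounting (with the paper's cruder count $O(|\cP_C|^k)$ for rich hypersurfaces, a uniform second-level degree provably does not stay within the main term, which is why the paper takes $E_i$ depending on $l_i/D_i$), and even so you must still carry out the component summations $\sum_i D_i\le D$, $\sum_j\Delta_{i,j}\le D_iE_i$ with H\"older, which you only allude to.

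The genuine gaps are at the codimension-two level, and they are exactly the places where the paper has to work hardest. First, your asserted cell and crossing counts in terms of $\deg(X_2)$ (e.g.\ $O_c(\deg(X_2)\,\ell_2)$ cells met by a hypersurface) are not a direct consequence of Barone--Basu, which is stated in terms of the degrees of \emph{defining polynomials}; the inequalities $\delta(X)\le\deg(X)\le\delta(X)^{d-e}$ you invoke go the wrong way here. The needed input is Proposition \ref{prop:1} (via Chardin's Hilbert-function bound): a codimension-two component $W$ is a component of $V(\wt f,\wt g)$ with $\deg(\wt f)\deg(\wt g)=O(\deg W)$. Using instead two defining equations of degree $\delta(W)\le E_i$ gives crossing bounds of order $nE_i^2\ell_2$ per component, which after summing over the up to $D_iE_i$ components is too large by a factor $E_i^2$. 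Second, your dichotomy ``hypersurface contained in the partitioning variety or not'' is a complex-algebraic one and misses the real phenomenon that forces the paper's good/bad analysis: a hypersurface $H$ not containing the complex surface $W_{i,j}$ can still contain isolated real points of $W_{i,j}(\R)$ or positive-dimensional real germs of it, and for such $H$ the number of cells met by $H\cap W_{i,j}(\R)$ cannot be bounded by a routine application of Theorem \ref{thm:2} to the triple intersection (its dimension/regular-sequence hypotheses can fail). The paper needs the decomposition into $B_{i,j}(H)$ and $G_{i,j}(H)$, the perturbation $b^2-\eps$ of Proposition \ref{prop:A} for the good part, condition (\ref{item:5}) to show each non-isolated point is bad for at most three hypersurfaces, and a separate Barone--Basu count of isolated real points. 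Without a substitute for these steps, your third-level crossing estimate --- and hence the contribution you claim is ``controlled'' there --- is unproved.
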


This result is an application of Theorem \ref{thm:3} together with the
refined bounds for the number of connected components of a
semi-algebraic set due to Barone and Basu
\cite{Barone-Basu11a,Barone-Basu2013}. Our whole approach is strongly
inspired by the treatment of the unit distance problem in three
dimensions in \cite{Zahl:ibnpsitd,KMSS:udtd}.

Theorem \ref{thm:main} is a particular case of a conjectural bound for
the number of point-hypersurface incidences in $\R^{d}$ (Conjecture
\ref{conj:main-conjecture}).  Related with this, we propose two
further conjectures: a generalization of our polynomial partitioning
theorem to varieties of arbitrary codimension (Conjecture
\ref{conj:4}) and a bound for the number of connected components of a
semi-algebraic set depending on the degree of that variety, instead of
the B\'ezout number of a set of defining equations
(Conjecture~\ref{conj:3}). If one can show that these two conjectures
are true, it would be an important step in proving Conjecture
\ref{conj:main-conjecture} {via} the polynomial partitioning
method.

\begin{remark}
  \label{rem:3}
  The results of this paper were announced in the talk
  \cite{Sombra:bhfpi} at the IPAM workshop ``Tools from algebraic
  geometry''. Shortly afterwards, a proof by Fox, Pach, Suk, Sheffer
  and Zahl of a weaker version of Conjecture
  \ref{conj:main-conjecture} with an extra factor $m^{\eps}$ was
  announced in Sheffer's blog \cite{Sheffer:idds} and eventually
  appeared in \cite{Sheffer-et-al}.
\end{remark}

\medskip \noindent {\bf Acknowledgments.} We thank 
Zuzana Safernov\'a/Pat{\'a}kov{\'a},
Micha Sharir, Noam Solomon and Joshua Zahl for useful discussions and
pointers to the literature.
We also thank the anonymous referees for their remarks and
corrections, which have significantly improved this paper.

Part of this work was done while the authors met at the Institute for
Pure and Applied Mathematics (IPAM) during the Spring 2014
research program ``Algebraic Techniques for Combinatorial and
  Computational Geometry''.

\section{Preliminaries on Hilbert functions and semi-algebraic
  geometry} \label{sec:prel-hilb-funct}

Throughout this paper, we denote by $\N$ the set of nonnegative
integers.  Bold letters denote finite sets or sequences of objects,
where the type and number should be clear from the context: for
instance, $\bfx$ might denote the group of variables
$\{x_1,\dots,x_d\}$ so that $\R[\bfx]$ denotes the polynomial ring
$\R[x_1,\dots,x_d]$.

Given functions $f,g\colon \N\to \N$, the Landau symbol $f=O(g)$ means
that there exists $c \ge 0$ such that $f(l)\le c\, g(l)$ for all $l\in
\N$.  If we want to emphasize the dependence of the constant $c$ on
parameters, say $d$   and $k$, we will write $f=O_{d,k}(g)$.

\subsection{Hilbert functions and degree of definition of
  varieties} \label{sec:hilb-funct-degr}

Let $\PP^{d}(\C)$ denote the $d$-dimensional projective space over the
complex numbers. For an equidimensional variety $X \subset
\PP^{d}(\C)$, we denote by $ \dim(X)$ and $\deg(X)$ its dimension and
degree, respectively.  Recall that the degree of $X$ is classically
defined as the number of points in the intersection of $X$ with a
generic linear subspace $H$ of dimension~$d-\dim(X)$.

When $X$ is a hypersurface, this variety is defined by a single squarefree
homogeneous polynomial $g\in \C[z_{0}, \dots, z_{d}]$, unique up a
scalar factor, and we have $\deg(X)= \deg(g)$. In the other extreme,
when $\dim(X)=0$, we have $\deg(X)=\# X$.

A basic property of the notion of degree of varieties is its behavior
with respect to intersections. In particular, it verifies the
following version of \emph{B\'ezout's inequality}~\cite[Example
8.4.6]{Fulton:IT}: let $X_{i}\subset \PP^{d}(\C)$, $i=1,\dots t$, be
equidimensional varieties and $Z_{j}$, $j=0, \dots, l$, the irreducible
components of the intersection $\bigcap_{j=1}^{t}X_{j}$. Then
\begin{equation}
  \label{eq:2}
  \sum_{j=0}^{l}\deg(Z_{j})\le \prod_{i=1}^{t}\deg(X_{i}).
\end{equation}
In particular, if $g_{1},\dots, g_{d}\in \C[z_{0}, \dots, z_{d}]$ is a
family of homogeneous polynomials whose zero set in $\PP^{d}(\C)$ is
finite, then the cardinality of this zero set is bounded
by~$\prod_{i=1}^{d}\deg(g_{i})$.

\begin{definition}
  \label{def:2}
  Let $X\subset \PP^{d}(\C)$ be an irreducible variety and $\delta\ge
  1$. We say that $X$ is \emph{partially defined at
    degree} $\delta$ if there are homogeneous polynomials
  $g_{1},\dots, g_{t}\in \C [z_{0},\dots, z_{d}]$ of degree bounded by
  $\delta$ such that $X$ is an irreducible component of the zero set
  in $\PP^{d}(\C)$ of these polynomials.  Equivalently, there is an
  open subset $U\subset \PP^{d}(\C)$ such that $X\cap U\ne \emptyset $
  and  the zero set in $U$ of $g_{1},\dots, g_{t}$ agrees
  with $X\cap U$.

We denote by $\delta(X)$ the
  \emph{degree of partial definition} of $X$, defined as
  the minimal integer $\delta\ge 1$ such that $X$ is
  {partially defined at degree}~$\delta$.
\end{definition}

The degree of a variety and its degree of partial
definition are related by the following inequalities. 

\begin{lemma}
  \label{lemm:2}
  Let $X\subset \PP^{d}(\C)$ be an irreducible variety . Then
\begin{equation*}
  \delta(X)\le \deg(X) \le \delta(X)^{d-\dim(X)}.
\end{equation*}
\end{lemma}

\begin{proof}
  We first prove the left inequality. Set $e=\dim(X)$ and identify the
  projective space $\PP^{e+1}(\C)$ with the linear subspace of
  $\PP^{d}(\C)$ defined by the equations $z_{e+2}=\dots=z_{d}=0$.
Let 
$L\subset \PP^{d}(\C)$ be a generic
linear subspace of dimension $d-e-2$.
By making a linear change in coordinates which keeps $z_{e+2},\ldots,z_d$ unchanged,
and changes only the coordinates $z_{0},\cdots,z_{e+1}$, we can assume that $L$ is defined by the equations $z_0 = \cdots = z_{e+1}=0$. 
Now consider the projection
\begin{displaymath}
  \pi_{L}\colon \PP^{d}(\C)\setminus L \longrightarrow 
  \PP^{e+1}(\C)
\end{displaymath}
defined, for a point $p\in \PP^{d}(\C)\setminus L$, by setting
$\pi_{L}(p)$ as the unique
point in the intersection of $\PP^{e+1}(\C)$ with the linear subspace
generated by $L$ and $p$.
In other words $\pi_L((z_0:\cdots:z_d)) = (z_0:\cdots:z_{e+1}:0:\cdots:0)$ for 
$(z_0:\cdots:z_d) \not\in L$.
  
   Then $\overline{\pi_L(X)}$, the closure of
the image of $X$ under this map, is a hypersurface of $ \PP^{e+1}(\C)$
of the same degree as $X$. This hypersurface is defined by a
homogeneous polynomial $f_L\in \C[z_{0},\dots,z_{e+1}]$ with
$$
\deg(f_{L}) = \deg(\overline{\pi_L(X)}) = \deg(X). 
$$
Then the 
polynomial $f_L$ considered as an element of the ring $\C[z_0,\ldots,z_d]$
is
a homogeneous polynomial of degree $\deg(X)$ defining a hypersurface
of $\PP^{d}(\C)$ which contains $X$.  By choosing sufficiently many
linear subspaces $L$ as above, one can construct a family of
homogeneous polynomials of degree $\deg(X)$ defining the variety
$X$. Hence $\delta(X) \le \deg(X)$, as stated.

For the right inequality, let $g_{1},\dots, g_{t}\in \C [z_{0},\dots,
z_{d}]$ be a family of homogeneous polynomials of degree $\le
\delta(X)$ having $X$ as an irreducible component of its zero set. By
taking generic linear combinations, we can suppose that
$t=d-e$. B\'ezout's inequality \eqref{eq:2} then implies that $\deg(X)
\leq \delta(X)^{d - e}$, proving the inequality. 
\end{proof}

\begin{remark}
\label{rem:deltavsdeg}
The degree of partial definition of a variety can by
much smaller than its degree. An example is provided by the
Grassmannian $\Gr(1,n)$, the space parametrizing lines in $\PP^n(\C)$,
included in the projective space $\PP\big(\bigwedge^{2}\C^{n+1}\big)$ via the
Pl\"{u}cker embedding.

The degree of this Grassmannian is
\[
\deg(\Gr(1,n)) =  \frac{1}{n-1}\binom{2n -2}{n},
\]
which clearly grows with $n$. On the other hand, this variety is cut
out by certain quadratic equations, called the Pl\"{u}cker
relations. Hence,
\[
\delta(\Gr(1,n)) =  2.
\]
\end{remark}

For irreducible varieties of codimension $2$, we have the
following sharpening of the second inequality in Lemma \ref{lemm:2}. 

\begin{lemma}
\label{lem:codim2}
 Let $X\subset \PP^{d}(\C)$ be an irreducible variety of
 codimension 2. Let $\delta_{1}\ge 1$ be the minimal degree of a
  hypersurface of $\PP^{d}(\C)$ containing $X$.
  Then
  \[
  \deg(X) \leq \delta_1 \delta(X).
  \]
\end{lemma}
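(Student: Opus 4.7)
The plan is to exhibit $X$ as a component of the intersection of two hypersurfaces, one of degree at most $\delta_{1}$ and one of degree at most $\delta_{2}$, and then conclude by the B\'ezout inequality for projective varieties.

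First, let $F$ be a homogeneous polynomial of degree $\delta_{1}$ vanishing on $X$, and let $H\subset \PP^{d}(\C)$ be an irreducible component of $V(F)$ containing $X$. Then $H$ is an irreducible hypersurface with $\deg(H)\le \delta_{1}$, and $X\subsetneq H$ since $X$ has codimension two. Next, using the definition of $\delta(X)=\delta_{2}$, choose homogeneous polynomials $g_{1},\dots,g_{t}$ of degree at most $\delta_{2}$ such that $X$ is an irreducible component of $V(g_{1},\dots,g_{t})$. The key observation is that at least one of the $g_{i}$ cannot vanish identically on $H$: otherwise $H\subseteq V(g_{1},\dots,g_{t})$, and since $X\subseteq H$ is itself an irreducible component of this zero set, one would force $X=H$, contradicting that $X$ has codimension two.

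Pick such a $g_{i}$, and set $H'=V(g_{i})$, a hypersurface of degree at most $\delta_{2}$. Since $H$ is irreducible and is not contained in $H'$, the intersection $H\cap H'$ is proper, i.e.\ of pure codimension two in $\PP^{d}(\C)$. By the B\'ezout inequality, the sum of the degrees of the irreducible components of $H\cap H'$ is at most $\deg(H)\deg(H')\le \delta_{1}\delta_{2}$. Finally, $X$ is an irreducible subvariety of $H\cap H'$ of codimension two, hence equals one of these components; in particular $\deg(X)\le \delta_{1}\delta_{2}$, which is the desired inequality.

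The only subtlety I anticipate is the step showing that some $g_{i}$ does not vanish on $H$, where one must use that $X$ is specifically an \emph{irreducible component} of $V(g_{1},\dots,g_{t})$, not merely a subvariety; everything else is a direct application of the B\'ezout inequality.
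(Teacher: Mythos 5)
Your argument is correct, and it is exactly the B\'ezout-type argument the paper has in mind: the paper gives no detailed proof, stating only that the lemma ``is an easy consequence of B\'ezout theorem,'' and your proof supplies precisely those details (an irreducible hypersurface $H\supset X$ of degree $\le\delta_1$, a degree-$\le\delta_2$ polynomial from the defining family not vanishing on $H$, and the B\'ezout bound for the resulting proper intersection, of which $X$ is a component). The step you flag as subtle --- that some $g_i$ does not vanish on $H$ because $X$ is an irreducible \emph{component} of $V(g_1,\dots,g_t)$ --- is handled correctly.
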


 \begin{proof}
   Let $f$ be a homogeneous polynomial of degree $\delta_1\geq 1$
   vanishing on $X$. By the minimality assumption, this polynomial
   must be irreducible. Since $X$ is of codimension~2, there are two
   homogeneous poynomials $g_1,g_2$ of degree $\le \delta(X)$, having
   $X$ as an irreducible component of its zero set. By clearing common
   factors, we can also assume that one of these polynomials are
   coprime. Hence, at least 
   one of these
   these polynomials (say~$g_1$) is not
   divisible by $f$. Thus $X$ is also an irreducible component of the
   zero set of $f$ and $g_{1}$.  The lemma then follows from
   B\'ezout's inequality~\eqref{eq:2}.
 \end{proof}

Given a homogeneous ideal $I\subset
\C[z_{0},\dots, z_{d}]$, the quotient $ \C[z_{0},\dots, z_{d}]/I$ is a
graded $\C$-algebra. The \emph{Hilbert function} of $I$ is the function
$\Hf_{I}\colon \N\to \N$ given, for $\ell \in\N$, by the dimension of the
$\ell $-th graded piece of this quotient, that is
\begin{displaymath}
  \Hf_{I}(\ell )=\dim_{\C }\big(\C [z_{0},\dots, z_{d}]/I \big)_{\ell }. 
\end{displaymath}
By Hilbert's theorem, there is a polynomial $\Hp_{I}\in \Q[t]$ and an
integer $\ell _{0}\in \N$ with
\begin{displaymath}
  \Hf_{I}(\ell )= \Hp_{I}(\ell ) \quad \text{ for } \ell \ge \ell _{0}.
\end{displaymath}

For an  equidimensional  variety $X\subset \PP^{d}(\C)$,
we denote by $I(X)\subset
\C[z_{0},\dots, z_{d}]$ its defining ideal. Then $\Hp_{I(X)}$ is a
polynomial of degree $\dim(X)$ and leading coefficient equal to
the quotient ${\deg(X)}/{\dim(X)!}$.

In Theorem \ref{thm:1} below, we collect the upper and lower bounds
for Hilbert functions that we will use later on. Because of our
applications, we restrict to ideals coming from irreducible projective
varieties, although these bounds are valid in greater generality.
Recall that binomial coefficients are defined, for $i,n\in \Z$, by
\begin{displaymath}
  {n\choose i}=
  \begin{cases}
\displaystyle    \frac{n!}{i!(n-i)!} & \text{ if } 0\le i\le n, \\
0 & \text{ otherwise}. 
  \end{cases}
\label{sec:hilbert-functionsc}
\end{displaymath}

\begin{theorem}
  \label{thm:1}
Let $X\subset \PP^{d}(\C)$ be an irreducible variety  of dimension $e\ge 0$. 
\begin{enumerate}
\item \label{item:1} For $\ell \ge 0$, 
\begin{displaymath}
  \Hf_{I(X)}(\ell )\le \deg(X) {\ell +e\choose e}.
\end{displaymath}
\item \label{item:3} For $\ell \ge (d-e)(\delta(X)-1)+1$,
\begin{displaymath}
  \Hf_{I(X)}(\ell )\ge \deg(X) {\ell - (d-e)(\delta(X)-1) +e\choose e}.
\end{displaymath}
\end{enumerate}
\end{theorem}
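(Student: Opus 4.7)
My plan is to treat the two inequalities separately; both are classical estimates drawn from \cite{Chardin:mfhcia} and \cite{CP:ri}, so my writeup would consist mainly of pointers to the literature together with a brief indication of the underlying ideas.

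For part \ref{item:1} the natural strategy is induction on $e=\dim(X)$. The base case $e=0$ is immediate: $X$ is a reduced point, $\deg(X)=1$, and $\Hf_{I(X)}(\ell)=1$ for all $\ell\ge 0$. For the inductive step, write $R=\C[z_{0},\dots,z_{d}]$ and pick a sufficiently generic linear form $L\in R_{1}$; since $R/I(X)$ is a domain ($X$ being irreducible) and $L\notin I(X)$, multiplication by $L$ on $R/I(X)$ is injective, and the short exact sequence
$$0\to (R/I(X))[-1]\xrightarrow{\;L\;} R/I(X)\to R/(I(X)+(L))\to 0$$
yields the difference identity $\Hf_{R/I(X)}(\ell)-\Hf_{R/I(X)}(\ell-1)=\Hf_{R/(I(X)+(L))}(\ell)$. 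By B\'ezout, the projective scheme defined by $I(X)+(L)$ has pure dimension $e-1$ and total degree $\deg(X)$; applying the inductive hypothesis to its irreducible components and summing their degrees bounds $\Hf_{R/(I(X)+(L))}(\ell)$ above by $\deg(X)\binom{\ell+e-1}{e-1}$, and summing the differences from $0$ to $\ell$ (via the hockey-stick identity) produces the claimed estimate. The delicate point I am sweeping under the rug is that $I(X)+(L)$ is in general not saturated, so comparing it with the coordinate ring of the reduced hyperplane section requires a separate regularity argument; this is exactly what is carried out in \cite{Chardin:mfhcia}, and I would cite that paper for the bookkeeping.

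For part \ref{item:3}, I would appeal directly to \cite{CP:ri}. The heart of the Chardin--Philippon argument is a Castelnuovo--Mumford regularity estimate: if $g_{1},\dots,g_{t}\in R$ are homogeneous forms of degrees at most $\delta(X)$ such that $X$ is an irreducible component of their zero locus in $\PP^{d}(\C)$, then the regularity index of $I(X)$ is bounded by $(d-e)(\delta(X)-1)$. Once this is known, the Hilbert function agrees with the Hilbert polynomial $\Hp_{I(X)}$ past that threshold, and the shifted comparison with the leading term $\deg(X)\binom{\ell+e}{e}$ of $\Hp_{I(X)}$ delivers the stated inequality. The main obstacle, and the reason I would quote rather than reproduce this step, is precisely the cohomological bookkeeping needed to pass from a set-theoretic defining family of forms of bounded degree to a sharp regularity bound for the prime ideal $I(X)$: this is the principal technical contribution of Chardin and Philippon and I would not try to redo it from scratch.
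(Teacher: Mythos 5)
Your proposal matches the paper's treatment: the paper proves Theorem \ref{thm:1} simply by citing the same two sources, noting that part \eqref{item:1} is a special case of the Th\'eor\`eme of \cite{Chardin:mfhcia} and part \eqref{item:3} of Corollaire 3 of \cite{CP:ri}. Your added sketches (generic hyperplane section with the saturation caveat for the upper bound, and the regularity-based argument for the lower bound) are consistent with those references, and the technical points you defer are exactly the ones the paper also leaves to the literature.
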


\begin{proof}
  The upper bound in \eqref{item:1} is \cite[Th\'eor\`eme on page
  306]{Chardin:mfhcia} applied to the base field $\C$ and the ideal
  $I(X)$.  Similar bounds can also be derived from  \cite{Nesterenko:ecfpd} or
  \cite[Proposition 2.11]{Sombra:bhfpi}.

  The upper bound in \eqref{item:3} is a particular case of
  \cite[Corollaire 3]{CP:ri}. Indeed, let $g_{1},\dots, g_{t}\in \C
  [z_{0},\dots, z_{d}]$ be a family of homogeneous polynomials of
  degree $\le \delta(X)$ having $X$ as an irreducible component of its
  zero set. By taking generic linear combinations, we can suppose
  without loss of generality that $t=d-e$. Consider the ideals
  $I=(g_{1},\dots, g_{d-e})$ and $J=I(X)$. Following the notation in
  page 476 of loc. cit., the ideal $I^{\langle d-e\rangle}$ is defined
  as the intersection of the isolated primary ideals of $I$ of
  codimension $d-e$. Hence $J\subset I^{\langle d-e\rangle}$.
   We can
  then apply \cite[Corollaire 3]{CP:ri} to these ideals. In the
  notation of this result, $m=d-e$, $d_{i}=\deg(g_{i})$ for
  $i=1,\dots, d-e$, and $r=d-e$. This result then implies that, for
  $\ell\ge \sum_{i=1}^{d-e}\deg(g_{i}) - (d-e)$,
\begin{displaymath}
  \Hf_{I(X)}(\ell )\ge \deg(X) {\ell +d- \sum_{i=1}^{d-e}\deg(g_{i})\choose e},
\end{displaymath}
which gives the lower bound in \eqref{item:3}.
\end{proof}

The following result is a consequence of Theorem
\ref{thm:1}\eqref{item:1}, and appears as a particular case of
\cite[Corollaire 3]{Chardin:mfhcia}. We include its proof, for the
convenience of the interested reader.

\begin{proposition}
  \label{prop:1}
  Let $X\subset \PP^{d}(\C)$ be an irreducible variety of
  codimension 2. Then there are coprime polynomials $f_{1},f_{2}\in
  I(X) $ such that
\begin{displaymath}
  \deg(f_{1})\deg(f_{2})\le d(d-1)\deg(X). 
\end{displaymath}
\end{proposition}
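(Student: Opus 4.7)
\emph{Proof plan.} The plan is to invoke the upper bound for the Hilbert function (Theorem~\ref{thm:1}\eqref{item:1}, with $e=\dim(X)=d-2$) twice: once to produce a polynomial $f_{1}\in I(X)$ of small degree, and once more to produce a second polynomial $f_{2}\in I(X)$ of small degree that is coprime to~$f_{1}$.

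For the first step, let $\delta_{1}\ge 1$ be the least integer with $I(X)_{\delta_{1}}\ne 0$. Since $\Hf_{I(X)}(\delta_{1}-1)=\binom{\delta_{1}+d-1}{d}$, Theorem~\ref{thm:1}\eqref{item:1} yields
\[
\binom{\delta_{1}+d-1}{d}\le \deg(X)\,\binom{\delta_{1}+d-3}{d-2},
\]
which simplifies to $(\delta_{1}+d-1)(\delta_{1}+d-2)\le d(d-1)\deg(X)$. Any nonzero $f_{1}\in I(X)_{\delta_{1}}$ is automatically irreducible: since $X$ is irreducible and contained in $V(f_{1})$, at least one irreducible factor of $f_{1}$ must lie in $I(X)$, and the minimality of $\delta_{1}$ forces this factor to equal $f_{1}$ up to a scalar.

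For the second step, the hypersurface $V(f_{1})$ is irreducible of dimension $d-1$ and contains the irreducible variety $X$ of dimension $d-2$, so $X\subsetneq V(f_{1})$ and hence $I(X)\supsetneq (f_{1})$. Let $\delta_{2}$ be the least degree of an element of $I(X)\setminus(f_{1})$ and pick $f_{2}\in I(X)_{\delta_{2}}$ not divisible by $f_{1}$; since $f_{1}$ is irreducible, $\gcd(f_{1},f_{2})=1$. If $\delta_{2}=\delta_{1}$, the desired bound $\delta_{1}\delta_{2}=\delta_{1}^{2}\le d(d-1)\deg(X)$ follows directly from the first step. Otherwise $\delta_{2}>\delta_{1}$, and the minimality of $\delta_{2}$ forces $I(X)_{\ell}=f_{1}\cdot\C[z_{0},\dots,z_{d}]_{\ell-\delta_{1}}$ for every $\delta_{1}\le\ell<\delta_{2}$, so
\[
\Hf_{I(X)}(\delta_{2}-1)=\binom{\delta_{2}+d-1}{d}-\binom{\delta_{2}+d-1-\delta_{1}}{d};
\]
combining this equality with Theorem~\ref{thm:1}\eqref{item:1} yields an inequality relating $\delta_{1}$, $\delta_{2}$, and $\deg(X)$.

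The main obstacle is to extract the sharp bound $\delta_{1}\delta_{2}\le d(d-1)\deg(X)$ from this last inequality by a careful but elementary manipulation of binomial coefficients. Using the telescoping identity $\binom{a}{d}-\binom{b}{d}=\sum_{k=b}^{a-1}\binom{k}{d-1}$, the left-hand side of the combined inequality grows like $\delta_{1}\,\delta_{2}^{\,d-1}/(d-1)!$ as $\delta_2\to\infty$, while the right-hand side grows like $\deg(X)\,\delta_{2}^{\,d-2}/(d-2)!$, so the correct order of magnitude $\delta_{1}\delta_{2}=O(d\deg(X))$ is immediate; the exact constant $d(d-1)$ then follows with some additional bookkeeping, as carried out along the lines of~\cite[Corollaire~3]{Chardin:mfhcia}.
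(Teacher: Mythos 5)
Your proposal is correct and takes essentially the same route as the paper, which proves this proposition only by invoking Theorem \ref{thm:1}\eqref{item:1} and citing \cite[Corollaire 3]{Chardin:mfhcia} for the details; your two-step choice of $f_{1}$ (minimal degree, hence irreducible) and $f_{2}$ (minimal degree outside $(f_{1})$) is exactly the argument behind that reference. The ``additional bookkeeping'' you defer does close with the stated constant: for integers $1\le \delta_{1}\le \delta_{2}$ one has the elementary inequality $\binom{\delta_{2}+d-1}{d}-\binom{\delta_{2}-\delta_{1}+d-1}{d}\ \ge\ \frac{\delta_{1}\delta_{2}}{d(d-1)}\binom{\delta_{2}+d-3}{d-2}$ (use $\sum_{m=1}^{M}m(m+1)\cdots(m+d-2)=\frac{1}{d}\,M(M+1)\cdots(M+d-1)$ and $\delta_{1}\le\delta_{2}$), which together with your displayed formula for $\Hf_{I(X)}(\delta_{2}-1)$ and Theorem \ref{thm:1}\eqref{item:1} yields $\delta_{1}\delta_{2}\le d(d-1)\deg(X)$.
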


\begin{proof}
  Set $D=\deg(X)$.  By Theorem \ref{thm:1}\eqref{item:1}, for $\ell\ge
  0$,
\begin{equation}
  \label{eq:6}
\Hf_{I(X)}(\ell)  \le D{\ell+d-2\choose d-2}.  
\end{equation}
We have $ \Hf_{\{0\}}(\ell)= \dim_{\C}\C[\bfz]_{\ell} ={\ell+d\choose
  d} $. This implies that, for $\ell_{1}=\lfloor
(d(d-1)D)^{1/2}\rfloor$,
\begin{displaymath}
  \Hf_{I(X)}(\ell_{1}) <\Hf_{\{0\}}(\ell_{1}).
\end{displaymath}
Hence, there is a homogeneous polynomial $f_{1}\in I(X)\setminus
\{0\}$ with $\deg(f_{1}) \le \ell_{1}$. We take $f_{1}$ of minimal
degree. Since the variety $X$ is irreducible, this polynomial has to
be irreducible too.

By the exact sequence
\begin{displaymath}
  0\longrightarrow \C[\bfz]\stackrel{\times f_{1}}{\longrightarrow} \C[\bfz] \longrightarrow
  \C[\bfz]/(f_{1}) \longrightarrow 0,
\end{displaymath}
 the Hilbert function of the principal ideal $(f_{1})$ is given by 
\begin{multline} \label{eq:8} \Hf_{(f_{1})}(\ell)= \dim_{\C}\C[\bfz]_{\ell} -
\dim_{\C}\C[\bfz]_{\ell-\ell_{1}}\\= 
{\ell+d\choose d} -
  {\ell-\ell_{1}+d\choose d}  =
  \sum_{j=0}^{\ell_{1}-1}{\ell-j+d-1\choose d-1}.
\end{multline}
Using this, one can verify  that, for $\ell_{2}=\max \big\{ \lfloor
(d(d-1)D)^{1/2}\rfloor, \lfloor d(d-1) D/\ell_{1}\rfloor\big\}$, 
\begin{displaymath}
  \Hf_{I(X)}(\ell_{2}) <\Hf_{(f_{1})}(\ell_{2}).
\end{displaymath}
Hence, there is a homogeneous polynomial $f_{2}\in I(X)\setminus
(f_{1})$ with $\deg(f_{2}) \le \ell_{2}$. Hence,  the polynomials  $f_{1},f_{2}$  are
coprime and satisfy
\begin{displaymath}
  \deg(f_{1})\deg(f_{2})\le \ell_{1}\ell_{2}\le d(d-1)D,
\end{displaymath}
as stated. 
\end{proof}

The next result gives a lower bound for the Hilbert function of the
 ideal of a variety $X$ of codimension two. For $\ell\ge0$, it exhibits three different behaviors, depending on the
codimension of the zero set of the graded part $I(X)_{\ell}$.

\begin{proposition}\label{prop:2}
  There is a constant $c=c(d) >0$ with the following property.  Let
  $X\subset \PP^{d}(\C)$ be an irreducible variety of codimension
  2. Let $\delta_{1}\ge 1$ be the minimal degree of a hypersurface of
  $\PP^{d}(\C)$ containing $X$ and set $\delta_{2}=\delta(X)$. Then
\begin{displaymath}
  \Hf_{I(X)}(\ell )\ge 
  \begin{cases}
   c\,  (\ell+1) ^{d}+1 & \text{ if } 1\le \ell \le \delta_{1}-1, \\
   c\,  \delta_{1}(\ell+1) ^{d-1}+1 & \text{ if } \delta_{1}\le \ell \le \delta_{2}-1, \\
   c\,  \delta_{1}\delta_{2}(\ell+1) ^{d-2}+1 & \text{ if } \delta_{2}\le \ell.
  \end{cases}
\end{displaymath}
\end{proposition}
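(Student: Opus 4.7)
The plan is to split the argument into the three regimes of the statement and use the identity $\Hf_{I(X)}(\ell) = \binom{\ell+d}{d} - \dim_{\C} I(X)_\ell$ to turn each case into an upper bound on $\dim_\C I(X)_\ell$.

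In the first regime $1 \le \ell \le \delta_1-1$, the definition of $\delta_1$ gives $I(X)_\ell = 0$, so $\Hf_{I(X)}(\ell) = \binom{\ell+d}{d} \ge (\ell+1)^d/d!$ and a slightly lowered constant absorbs the extra $+1$. In the second regime $\delta_1 \le \ell \le \delta_2-1$, I would first check that the minimal-degree form $f\in I(X)_{\delta_1}$ is irreducible: a nontrivial factorization $f = pq$ forces $X\subseteq V(p)$ or $X\subseteq V(q)$ by irreducibility of $X$, contradicting the minimality of $\delta_1$. Then I would show that every form in $I(X)_\ell$ is divisible by $f$: letting $J$ be the ideal generated by $I(X)_{\le\ell}$, its zero set $V(J)$ contains $X$ but not as an irreducible component (because $\ell < \delta(X)$), and since $\dim X = d-2$ and $V(J)\subseteq V(f)$ with $V(f)$ irreducible of dimension $d-1$, this forces $V(J) = V(f)$, from which $f\mid g$ for every $g\in I(X)_\ell$ follows. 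Consequently $\dim_{\C} I(X)_\ell \le \binom{\ell-\delta_1+d}{d}$, and a routine estimate of $\binom{\ell+d}{d} - \binom{\ell-\delta_1+d}{d} = \sum_{j=0}^{\delta_1-1}\binom{\ell-\delta_1+d+j}{d-1}$ (by retaining the top half of its terms and using $\ell\ge\delta_1$) produces the desired $\gtrsim \delta_1(\ell+1)^{d-1}$.

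For the third regime $\ell\ge\delta_2$ the new ingredient is Proposition \ref{prop:1}, which supplies coprime $f_1, f_2 \in I(X)$ with $\deg(f_1)\deg(f_2) \le d(d-1)\deg(X)$. Since each $\deg(f_i) \ge \delta_1$ and $X$ is a codimension-two irreducible component of the complete intersection $V(f_1,f_2)$, we obtain $\delta_2 \le \max\{\deg(f_1),\deg(f_2)\}$, and hence
\[
\deg(X) \ge \frac{\delta_1\delta_2}{d(d-1)}.
\]
For $\ell\ge 4\delta_2$ I would apply Theorem \ref{thm:1}\eqref{item:3} with $e=d-2$: this yields $\Hf_{I(X)}(\ell)\ge \deg(X)\binom{\ell-2\delta_2+d}{d-2}$, and $\ell-2\delta_2\ge \ell/2$ makes the binomial $\gtrsim (\ell+1)^{d-2}$, matching the target. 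For the intermediate band $\delta_2 \le \ell < 4\delta_2$ I would instead use monotonicity of the Hilbert function: since $X$ is irreducible, $\C[z_{0},\dots,z_{d}]/I(X)$ is a domain and multiplication by any linear form outside $I(X)$ is injective on graded pieces, so $\Hf_{I(X)}(\ell) \ge \Hf_{I(X)}(\delta_2-1)$. The latter is $\gtrsim \delta_1\delta_2^{d-1}$ either by Regime 2 evaluated at $\ell=\delta_2-1$ (when $\delta_1 < \delta_2$) or by Regime 1 evaluated at $\ell=\delta_1-1$ (when $\delta_1=\delta_2$), and this dominates the target $c\,\delta_1\delta_2(\ell+1)^{d-2} \le c\cdot 4^{d-2}\delta_1\delta_2^{d-1}$ once the implicit constant is taken small.

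The main obstacle I expect is exactly this intermediate band $\delta_2 \le \ell < 4\delta_2$: the Regime-2 control of $\dim_\C I(X)_\ell$ breaks down as soon as $\ell\ge\delta_2$ (forms in $I(X)_\ell$ need no longer all be multiples of $f$), whereas Theorem \ref{thm:1}\eqref{item:3} only delivers a bound of the correct order once $\ell$ is comfortably larger than $\delta_2$. Bridging this gap via monotonicity of $\Hf_{I(X)}$, combined with the lower bound $\deg(X)\gtrsim \delta_1\delta_2$ extracted from Proposition \ref{prop:1}, is the real heart of the proof.
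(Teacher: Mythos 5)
Your proof is correct and takes essentially the same route as the paper's: the same three regimes, with the middle regime reduced to the principal ideal generated by the minimal-degree form and estimated via the binomial difference, and the regime $\ell\ge\delta_2$ handled by combining Theorem \ref{thm:1}\eqref{item:3} with the inequality $\delta_1\delta_2\le d(d-1)\deg(X)$ coming from Proposition \ref{prop:1}, while the band just above $\delta_2$ is bridged by monotonicity of the Hilbert function from its value at $\delta_2-1$. You in fact supply two details the paper leaves implicit, namely the verification that $I(X)_\ell$ consists of multiples of the minimal-degree form for $\delta_1\le\ell\le\delta_2-1$ and the justification of the monotonicity step (including the case $\delta_1=\delta_2$).
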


\begin{proof}
  We have $\delta_{1}=\min\{\ell \ge 0 \mid I(X)_{\ell}\ne \{0\}\} $.
  Hence, for $ 1\le \ell \le \delta_{1}-1$,
\begin{equation*}
\Hf_{I(X)}(\ell)= \Hf_{\{0\}}(\ell)= \dim_{\C}\C[\bfz]_{\ell}=
{\ell+d\choose d}. 
\end{equation*}
Thus
\begin{equation} \label{eq:19}
\Hf_{I(X)}(\ell) \ge    c_{1} (\ell+1) ^{d}+1
\end{equation}
for a suitable constant $c_{1}>0$ depending only on $d$, giving the
first lower bound.

Let  $f_{1}$
be a nonzero polynomial in $I(X)$ of degree $\delta_{1}$. 
By the minimality property of $\delta_1$, this polynomial  must be
irreducible. 
We have 
 $\delta_{2}=\min\{\ell \ge \delta_{1} \mid I(X)_{\ell}\ne (f_{1})_{\ell}\} $.
Hence, for  $ \delta_1 \leq  \ell \leq \delta_2 -1$,
\[
\Hf_{I(X)}(\ell)=   \Hf_{(f_{1})}(\ell)=  \sum_{j=0}^{\delta_{1}-1}{\ell-j+d-1\choose d-1},
\]
where the second equality comes from \eqref{eq:8}.
It follows that, for $\delta_{1}\le \ell \le
\delta_{2}-1$, 
\begin{equation}\label{eq:22}
  \Hf_{I(X)}(\ell)\ge 
 \frac{\delta_{1}}{2(d-1)!}\Big(\ell- \frac{\delta_{1}}{2}\Big)^{d-1}+1 
\ge c_{2} \delta_{1}(\ell+1) ^{d-1}+1 
\end{equation}
for another constant $c_{2}=c_{2}(d)>0$. 

Finally, we consider the case when $\ell\ge \delta_{2}$. When $\ell\le
2   (\delta_{2}-1)$, we deduce from~\eqref{eq:22} that
\begin{equation}
\label{eq:23}
\Hf_{I(X)}(\ell)\ge  \Hf_{I(X)}(\delta_{2}-1) 
\ge c_{2} \delta_{1}\delta_{2}^{d-1}+1 \ge 
   c_{3} \delta_{1}\delta_{2}(\ell+1) ^{d-2}+1 .
\end{equation}
 Proposition \ref{prop:1} implies that
$ d(d-1)\deg(X)\ge \delta_{1}\delta_{2}$. Hence, for $\ell\ge
2(  \delta_{2}-1) +1$,  Theorem
\ref{thm:1}\eqref{item:3} implies that
\begin{equation}
  \label{eq:5}
  \Hf_{I(X)}(\ell)\ge  \deg(X) {\ell - 2(\delta_{2}-1)+d-2\choose d-2} \ge
  c_{4}\delta_{1}\delta_{2}(\ell+1)^{d-2}+1.
\end{equation}
The result follows from \eqref{eq:19},
\eqref{eq:22}, \eqref{eq:23} and \eqref{eq:5} by taking
$c=\min_{i}c_{i}$.
\end{proof}

\subsection{Connected components of semi-algebraic sets}

As explained in the introduction, the polynomial partitioning method
has to be coupled with bounds for the number of connected components
of semi-algebraic sets. When partitioning the Euclidean space $\R^{d}$, the
appropriate bound follows from the
Ole{\u\i}nik-Petrovski{\u\i}-Milnor-Thom's bounds for the Betti
numbers of a semi-algebraic set \cite{OP,Milnor2,T}: with
notation as in Theorem \ref{thm:Guth-Katz}, the number of connected
components of $\R^d \setminus V(g)$ is bounded by $ \ell(2\ell
-1)^{d-1} = O(\ell^{d}).$

In our situation, we will need the Barone-Basu bound for the number of
connected components, with a refined dependence on the degrees of the
polynomials \cite{Barone-Basu11a, Barone-Basu2013}.  We recall a
simplified version of this result in Theorem \ref{thm:2} below.

Given $f_{1},\dots, f_{e}\in \R[x_{1},\dots, x_{d}]$, we denote by
$V(f_{1},\dots, f_{e})$ its zero set in $\C^d$. For a variety
$X\subset \C^{d}$, we denote by $X(\R)=X\cap \R^{d}$ its set of real
points.  For a semi-algebraic subset $S \subset \R^d$, we denote by
$\Cc(S)$ the set of connected components of $S$. The $0$-th Betti
number $\betti(S)$ coincides with the cardinality of the set $\cc(S)$.

\begin{theorem}\label{thm:2}
  There is a constant $c=c(d)$ with the following property. Let  $f_{1},\dots, f_{e}, g\in \R[x_{1},\dots,
  x_{d}]$ with
  $\deg(f_1) \leq \cdots \leq \deg(f_e)\le \deg(g)$ such that
  $\dim(V(f_{1},\dots, f_{i}))= d-i$ for $i=1,\dots, e$. Then both
  \begin{displaymath}
   \betti(V(f_{1},\dots, f_{e})(\R)\setminus V(g))  \quad \text{ and }
   \quad 
   \betti(V(f_{1},\dots, f_{e},g)(\R)) 
  \end{displaymath}
  are bounded by $ c \deg(f_{1})\dots \deg(f_{e}) \deg(g)^{d-e}.$
\end{theorem}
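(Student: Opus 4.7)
The plan is to follow the general strategy of Barone--Basu: deform the real variety to a smooth bounded complete intersection over a non-archimedean extension, bound the $0$-th Betti number via Morse theory by a count of critical points of a generic linear function, and then control that count by a version of B\'ezout's theorem in which the degrees of the $f_i$'s each appear only once while $\deg(g)$ appears with multiplicity $d-e$.

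Concretely, let $\RR$ be a real closed field containing $\R$ together with an infinitesimal $\eps>0$, say $\RR=\R\la\eps\ra$. For case (b), replace the system $(f_1,\ldots,f_e,g)$ by the perturbed system
\begin{equation*}
  F_i\defeq f_i-\eps_i,\quad 1\le i\le e,\qquad G\defeq g^{2}-\eps_{e+1},
\end{equation*}
intersected with the closed ball of radius $1/\eps$ in $\RR^{d}$, for suitable generic positive infinitesimals $\eps_1,\ldots,\eps_{e+1}\in\RR$ chosen so that the resulting real variety $W$ is nonsingular, bounded, and has the dimension predicted by the hypothesis $\dim V(f_1,\ldots,f_i)=d-i$. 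By a standard Hardt trivialization / semicontinuity argument, $\betti\bigl(V(f_1,\ldots,f_e,g)(\R)\bigr)$ is at most $\betti(W)$. For case (a), one instead replaces $g$ by the positivity condition $\eps_{e+1}\,g^{2}\ge 1$ and works on the smooth bounded complete intersection obtained by adding the sphere of radius $1/\eps$; the formalism is the same.

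Next, pick a generic linear form $\phi$ on $\RR^{d}$ so that $\phi|_{W}$ is a Morse function. Since $W$ is smooth and compact, each connected component carries a local maximum of $\phi|_{W}$, so $\betti(W)$ is bounded by the number of critical points. These critical points are the common zeros of $F_1,\ldots,F_e,G$ together with the vanishing of all maximal minors of the Jacobian matrix of $(F_1,\ldots,F_e,G,\phi)$. I would then view this system dimension by dimension: by the genericity of the perturbation, the critical locus is zero-dimensional, and a careful application of B\'ezout's theorem (in the form used in \cite{Barone-Basu11a}, which uses a filtration by the degrees $\deg(f_1)\le\cdots\le\deg(f_e)\le\deg(g)$) yields the estimate
\begin{equation*}
  \#\{\text{critical points}\}\;\le\; c(d)\,\deg(f_1)\cdots\deg(f_e)\,\deg(g)^{d-e},
\end{equation*}
because the minors of the Jacobian, after the filtration, contribute $d-e$ factors bounded by $\deg(g)$ rather than by $\max_i\deg(f_i)+\deg(g)$.

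The main obstacle is achieving the exponent $d-e$ on $\deg(g)$, rather than the naive $d-e+1$ coming from straightforward B\'ezout on the deformed complete intersection together with the Jacobian minor conditions. Overcoming this requires the filtered B\'ezout argument: order the equations so that the determinantal equations of largest degree are absorbed into the $\deg(g)^{d-e}$ block, and use the hypothesis $\dim V(f_1,\ldots,f_i)=d-i$ to ensure that the intersection stays generically transverse at every stage, so no B\'ezout factor is wasted.
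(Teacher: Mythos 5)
There is a genuine gap, in fact two. First, your deformation step is not sound as stated: replacing the equations by $F_i=f_i-\eps_i$ with a \emph{single} choice of sign can destroy connected components of the real variety, so the claimed semicontinuity $\betti\bigl(V(f_1,\dots,f_e,g)(\R)\bigr)\le\betti(W)$ fails. For example, for $f=-(x_1^2+\cdots+x_d^2)$ the set $V(f)(\R)$ is a point while $V(f-\eps)(\R)=\emptyset$. The standard repairs --- replacing the system by a sum of squares, or running over all sign patterns $f_i\pm\eps_i$ as in the Ole\u{\i}nik--Petrovski\u{\i}--Milnor--Thom argument --- either square the degrees or introduce extra factors, and thus ruin exactly the refined product $\deg(f_1)\cdots\deg(f_e)\deg(g)^{d-e}$ you are after; handling this correctly is part of what makes the Barone--Basu argument delicate. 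Second, and more seriously, the ``filtered B\'ezout'' estimate on the number of critical points is asserted, not proved, and it is precisely the hard content of the theorem. The maximal minors of the Jacobian of $(F_1,\dots,F_e,G,\phi)$ have degree of the order of $\sum_i(\deg f_i-1)+(\deg g-1)$, so a direct B\'ezout count gives $d-e-1$ factors of size roughly $\sum_i\deg f_i+\deg g$, not $\deg g$; even the Lagrange-multiplier/multihomogeneous variants do not immediately produce a bound in which each $\deg f_i$ appears exactly once and $\deg g$ appears with exponent $d-e$. Saying one should ``order the equations so that the determinantal equations are absorbed into the $\deg(g)^{d-e}$ block'' names the difficulty rather than resolving it: that absorption is the main theorem of \cite{Barone-Basu11a,Barone-Basu2013}, proved there by a nontrivial induction with its own infinitesimal deformations and degree bookkeeping.

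Note also that the intended proof is much shorter than what you attempt: the two sets in question are exactly the realizations of the sign conditions $g>0$, $g<0$ and $g=0$ on the real algebraic set $V(f_1,\dots,f_e)(\R)$, and the bound is then an immediate application of \cite[Theorem 4]{Barone-Basu2013}, combined with the observation (Remark 1.10 of \emph{loc.\ cit.}) that the hypothesis $\dim V(f_1,\dots,f_i)=d-i$ on the complex varieties bounds the dimensions of the corresponding real algebraic sets. So either quote that result as a black box, as the paper does, or commit to reproving it --- in which case the critical-point count and the sign-sensitive deformation must be carried out in full, since they constitute the actual theorem.
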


\begin{proof}
  The semi-algebraic set $V(f_{1},\dots, f_{e})(\R)\setminus V(g) $ is the
  union of the realization of the sign conditions $\pm1$ of $g$ on
  $V(f_{1},\dots, f_{e})(\R)$. Similarly, $V(f_{1},\dots,
  f_{e},g)(\R)$ is the realization of the sign condition $0$ of $g$ on
  the same real algebraic variety. 

  The result  follows from \cite[Theorem 4]{Barone-Basu2013}
  and
  the fact that $\dim(V(f_{1},\dots, f_{i}))$ bounds from above the
  dimension of the semi-algebraic set $V(f_1,\ldots,f_i)(\R)$, see
  Remark 1.10 in 
  loc. cit.
    \end{proof}

We will also need the technical result below.  Given $p\in \R^{d}$
and $r > 0$, we denote by $\ball(p,r)$ the open ball in $\R^d$ with
center $p$ and radius $r$.  Given a variety $W\subset \C^{d}$ and a hypersurface
$H\subset \C^{d}$, we denote by $B(W,H)$ the subset of $W(\R)$
 of points $p \in W(\R)$ having an open neighborhood, in
the Euclidean topology of $W(\R)$,
contained in $H$. We also set $G(W,H)= W(\R) \setminus B(W,H)$.

\begin{proposition}
\label{prop:A}
Let $W \subset \C^4$ be a variety and $H, K \subset \C^{4}$ two
hypersurfaces. Let $b\in \R[x_{1},x_{2},x_{3}, x_{4}]$ be a polynomial
defining $H$.  Then there exists $\varepsilon_{0}>0$ such that, for
all $0<\eps\le \eps_{0}$ and any variety $\wt{W} \subset \C^{4}$
containing $W$, the number of connected components $C$ of $\R^4
\setminus K$ such that $C \cap G(W,H) \cap H \neq \emptyset$ is
bounded by
\[
\betti((\wt{W}(\R) \cap V(b^2 - \eps)) \setminus K).
\]
\end{proposition}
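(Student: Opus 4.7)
My plan is to build an injective map from the set of components $C$ described in the statement into the set of connected components of $(\wt{W}(\R)\cap V(b^{2}-\eps))\setminus K$, whose cardinality equals $\betti((\wt{W}(\R)\cap V(b^{2}-\eps))\setminus K)$. The map will send each relevant $C$ to a component $D_{C}$ that is entirely contained in $C$, from which injectivity is automatic since two distinct components of $\R^{4}\setminus K$ are disjoint. Because $\R^{4}\setminus K$ has only finitely many connected components (Milnor--Thom), I only need to handle finitely many $C$ simultaneously.

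For each such $C$, I would pick a point $p_{C}\in C\cap G(W,H)\cap H$. The defining property of $G(W,H)$ says that every Euclidean neighborhood of $p_{C}$ in $W(\R)$ meets $W(\R)\setminus H$, so $p_{C}$ lies in the closure of $W(\R)\setminus H$ inside $W(\R)$. The semi-algebraic curve selection lemma then supplies a continuous semi-algebraic arc $\gamma\colon[0,\eta)\to W(\R)$ with $\gamma(0)=p_{C}$ and $\gamma(t)\notin H$ for $t\in(0,\eta)$. Since $C$ is open in $\R^{4}$ and contains $p_{C}$, I may shrink $\eta$ so that $\gamma([0,\eta))\subset C$. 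Setting $\eps_{C}:=b(\gamma(\eta/2))^{2}>0$, the continuous function $t\mapsto b(\gamma(t))^{2}$ on $[0,\eta/2]$ takes the values $0$ and $\eps_{C}$ at the endpoints, so for every $\eps\in(0,\eps_{C}]$ the intermediate value theorem produces $t_{\eps}\in(0,\eta/2]$ with $b(\gamma(t_{\eps}))^{2}=\eps$. The point $q_{C}:=\gamma(t_{\eps})$ then lies in $W(\R)\cap V(b^{2}-\eps)\cap C$, and since $q_{C}\in C\subset\R^{4}\setminus K$, it lies in $(\wt{W}(\R)\cap V(b^{2}-\eps))\setminus K$ for any variety $\wt{W}\supset W$.

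Letting $D_{C}$ denote the connected component of $(\wt{W}(\R)\cap V(b^{2}-\eps))\setminus K$ containing $q_{C}$, I would argue $D_{C}\subset C$ as follows: $D_{C}$ is connected and disjoint from $K$, so it lies in a single connected component of $\R^{4}\setminus K$; as $q_{C}\in D_{C}\cap C$, that component must be $C$. Finally, setting $\eps_{0}:=\min_{C}\eps_{C}>0$ (a positive minimum over finitely many terms) gives a single threshold valid for all $\eps\in(0,\eps_{0}]$. Observe that $\eps_{C}$ depends only on the data $W$, $H$, $K$, and $b$, never on the enlargement $\wt{W}$, which enters only in the final sentence of the argument; hence the same $\eps_{0}$ works uniformly for every $\wt{W}\supset W$.

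The main obstacle I foresee is the second step: the defining property of $G(W,H)$ is purely qualitative (existence of arbitrarily close points of $W(\R)$ outside $H$, with no effective quantitative bound), and one needs to extract from it an honest point of $W(\R)\cap V(b^{2}-\eps)$ near $p_{C}$ for every sufficiently small $\eps$. Curve selection combined with the intermediate value theorem is the clean tool that bridges this gap; once the existence of $q_{C}$ is secured, the containment $D_{C}\subset C$ is a routine connectedness argument.
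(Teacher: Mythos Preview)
Your proof is correct and follows essentially the same strategy as the paper's: choose a witness $p_{C}$ in each relevant component, produce a path in $W(\R)$ from $p_{C}$ leaving $H$, apply the intermediate value theorem to locate a point on $V(b^{2}-\eps)$, and use connectedness to conclude injectivity. The only cosmetic difference is that the paper obtains the path from the local conical structure of semi-algebraic sets (local contractibility of $\ball(p_{C},r)\cap(W(\R)\setminus K)$) rather than the curve selection lemma; both are standard and interchangeable here.
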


\begin{proof}
  Consider the set of connected components
\begin{displaymath}
\mathcal{C} = \{ C \in \Cc(\R^4 \setminus K) \mid C \cap
  G(W,H) \cap H \neq \emptyset \}.   
\end{displaymath}
For each $C \in \mathcal{C}$ choose a point $p_C \in C \cap G(W,H)
\cap H$. Since $\mathcal{C}$ is a finite set, the set of points
$\{p_C\}_{C \in \mathcal{C}}$ is finite.
For each $C \in \mathcal{C}$ and $r>0 $, consider also the semi-algebraic
set given by
\begin{displaymath}
 U_r(p_C)= \ball(p_{C},r) \cap (W(\R) \setminus K). 
\end{displaymath}
By the definition of $G(W,H)$, the set $U_r(p_c)$ is not contained in
$H$.  Semi-algebraic sets are locally contractible because of their
local conical structure, see for instance \cite[Theorem
5.48]{BPRbook2}.  Hence, there exists $r_C > 0$ such that, for all $ 0
< r \leq r_C$, the  set $U_r(p_C)$ is contractible and,
in particular, connected. Set $r_0 = \min_{C} r_C$.

Choose also $q_C \in U_{r_0}(p_C) \setminus H$ and a semi-algebraic
path 
$\gamma_C:[0,1]\rightarrow U_{r_{0}}(p_C)$ 
with $\gamma(0)=p_C$ and
$\gamma(1) = q_C$. We have that $b^2(q_C) >0$ because $q_{C}\notin
H$. We set
$\eps_0 = \min_{C} b^2(q_C)$. 

By the intermediate value theorem, for all $C \in \mathcal{C}$ and $0
<\eps \leq \eps_0$, there exists $0<t_{C} \le 1$ such that $b^2(z_{C})
= \eps$ with $z_{C}= \gamma_C(t_C)$. By construction,
\begin{displaymath}
z_C \in ({W}(\R) \cap V(b^2 - \eps))\setminus K \subset (\wt{W}(\R) \cap V(b^2 -
\eps))\setminus K.    
\end{displaymath}
Moreover, $z_{C}\in C$ because this point is connected by a path to
$p_{C}$.  For $C,C' \in \mathcal{C}$ with $ C\neq C'$, the points
$z_C$ and $ z_{C'}$ belong to distinct connected components of
$\wt{W}(\R) \cap V(b^2 - \eps) \setminus K$.  Hence, the map $C\mapsto
z_{C}$ induces an injection between the set of connected components
$\cC$ and $\cc(\wt{W}(\R) \cap V(b^2 - \eps)) \setminus K$, which
proves the proposition.
\end{proof}

In connection with the application of the polynomial partitioning
theorem to incidence problems in higher dimensions, we propose the
following conjectural bound for the number of connected components of
a semi-algebraic set in terms of the degree of the variety instead of
the B\'ezout number of a set of defining equations.

\begin{conjecture} 
\label{conj:3}
Let $X \subset \C^d$ be an irreducible variety and  $g\in \R[x_{1},\dots, x_{d}]$  a polynomial of degree
$\ell\ge \delta(X)$. Then there exists a variety $Y\subset \C^{d}$
containing $X$ as an irreducible component such that
  \begin{displaymath}
    \betti(Y(\R)\setminus V(g)) \quad \text{ and } \quad \betti(Y(\R)\cap V(g))
  \end{displaymath}
are bounded by $O_{d}(\deg(X)  \ell^{\dim(X)})$.
\end{conjecture}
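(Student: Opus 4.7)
The plan is to reduce the conjectural Betti number estimates to the Barone-Basu bound (Theorem \ref{thm:2}). Set $e = d - \dim(X)$, the codimension of $X$. The goal is to exhibit polynomials $f_1, \ldots, f_e \in \R[\bfx]$ vanishing on $X$ and satisfying
\begin{enumerate}
\item $\dim(V(f_1, \ldots, f_i)) = d - i$ for $i = 1, \ldots, e$,
\item $\deg(f_1) \leq \cdots \leq \deg(f_e) \leq \ell$, and
\item $\deg(f_1) \cdots \deg(f_e) \leq C_d \cdot \deg(X)$ for a constant depending only on $d$.
\end{enumerate}
Setting $Y = V(f_1, \ldots, f_e) \subset \C^d$, condition (1) forces $Y$ to be equidimensional of dimension $d - e = \dim(X)$, and since $X \subset Y$ is irreducible of the same dimension, $X$ is an irreducible component of $Y$. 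Applying Theorem \ref{thm:2} to $f_1, \ldots, f_e$ and $g$ then yields
\begin{displaymath}
\betti(Y(\R) \setminus V(g)),\ \betti(Y(\R) \cap V(g)) \leq c \prod_{i=1}^{e} \deg(f_i) \cdot \deg(g)^{\dim(X)} \leq c\, C_d \, \deg(X)\, \ell^{\dim(X)},
\end{displaymath}
which is the sought estimate.

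The entire task therefore reduces to producing such a family $f_1, \ldots, f_e$, which amounts to a higher-codimension analog of Proposition \ref{prop:1}; the case $e = 2$ is settled there with $\deg(f_1) \deg(f_2) \leq d(d-1) \deg(X)$. My approach would be to start from a family $h_1, \ldots, h_t \in I(X)$ of set-theoretic local defining polynomials of degree at most $\delta(X) \le \ell$, and then proceed inductively on the codimension. At each step, the polynomial $f_i$ would be chosen as a generic linear combination of elements of $I(X)$ of appropriately controlled degree, obtained through the Chardin-Philippon regularity estimates (Theorem \ref{thm:1}) applied to $I(X)$ and to the current partial intersection ideal. Genericity, as in Proposition \ref{prop:1}, would ensure that the intersection dimension drops by exactly one at each step and that $X$ persists as an irreducible component throughout.

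The main obstacle is to keep the degree product $\prod_i \deg(f_i)$ at size $O_d(\deg(X))$ while individually each $\deg(f_i) \le \ell$. The crude choice $\deg(f_i) = \delta(X)$ for all $i$ only yields $\delta(X)^e$, which by \eqref{eq:3} can be as large as $\deg(X)^{d - \dim(X)}$, far weaker than what is needed. To achieve a linear dependence on $\deg(X)$, one must exploit the fact that if $X$ lies on hypersurfaces of low degrees $\delta_1 < \delta_2 < \cdots$ then some of the $f_i$ can be chosen of correspondingly low degree, compensated by a later factor whose degree is measured by a Hilbert-function jump. What seems to be missing is a structural multi-regime lower bound for $\Hf_{I(X)}$ in arbitrary codimension, generalizing the three-regime statement of Proposition \ref{prop:2}; establishing such a bound (or a direct interpolation-theoretic substitute) is the key ingredient, and appears to be what prevents the statement from being a theorem at present.
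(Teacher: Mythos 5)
The statement is left as a conjecture in the paper, which justifies it only in codimension at most two by exactly your reduction: take $Y=V(f_{1},f_{2})$ with $f_{1},f_{2}$ as in Proposition \ref{prop:1} and apply Theorem \ref{thm:2}. Your proposal reproduces that argument (your conditions (1)--(3) are precisely what Theorem \ref{thm:2} needs, and $\dim(Y)=\dim(X)$ with $X\subset Y$ does make $X$ an irreducible component) and correctly pinpoints the missing ingredient in higher codimension, namely a codimension-$e$ analog of Proposition \ref{prop:1} giving $\prod_{i}\deg(f_{i})=O_{d}(\deg(X))$ with each $\deg(f_{i})\le\ell$; this is exactly the obstruction that keeps the general statement conjectural, so your assessment matches the paper's.
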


When $X$ is an irreducible variety of codimension 2, this statement
follows easily from Proposition \ref{prop:1} and Theorem
\ref{thm:2}. In this case, the variety $Y$ is given, in the notation
of Proposition \ref{prop:1}, by the zero set of $f_{1}$ and $f_{2}$.

\section{Partitioning finite sets on varieties} 
\label{sec:part-finite-sets}

Given a set of points $\cP\subset \R^{d}$ and a set of polynomials
$\cG\subset \R[x_{1},\dots, x_{d}]$, for each choice of signs
$\gamma\in \{\pm1\}^{\cG}$ we put
  \begin{equation}
 \label{eqn:sign-pattern}
\cP(\gamma) =\{p\in \cP \mid 
\gamma_{g}
g(p) >0 \text{ for
  all } g\in \cG\},
 \end{equation}
If the  set of polynomials $\cG$ is clear from the context, then we say
 that $\cP(\gamma)$ is \emph{realized} by 
 $\gamma$.

 Given $g\in \R[x_{1},\dots,x_{d}]\setminus \{0\}$, we denote by
 $\irr(g) \subset \R[x_{1},\dots, x_{d}]$ a complete and irredundant
 set of irreducible factors of $g$. These irreducible factors are
 unique up to scalars  in $\R^{\times}$. To fix their
 indeterminacy, we choose them to be monic with respect to some fixed
 monomial order on $\R[x_{1},\dots, x_{d}]$. With this convention, the
 set $\irr(g)$ is uniquely defined and
  \begin{displaymath}
    g=\lambda\prod_{q\in \irr(g)}q^{e_{q}}
  \end{displaymath}
with $\lambda\in \R^{\times}$ and $e_{q}\in\N$. 

For $\ell\ge 0$, we denote by $\R[x_{1},\dots, x_{d}] _{\leq \ell}$
the linear subspace of $\R[x_{1},\dots, x_{d}]$ of polynomials of
degree bounded by $\ell$.  Recall that, for a variety $X\subset
\C^{d}$, we denote by $X(\R)=X\cap \R^{d}$ its set of real points.

We state and prove our polynomial partitioning theorem in terms of
sign conditions. For convenience, we state it for varieties of
codimension \emph{at most} two, even though we  prove it only when
the codimension is two. The cases when the codimension is smaller are
simpler and can be proven as in \cite{GK:eddpp, Zahl:ibnpsitd,
  KMSS:udtd}.

\begin{theorem} 
\label{thm:polynomial-partitioning} 
Let $X\subset \C^d$ be an irreducible variety of codimension at most
two, $\mathcal{P} \subset X(\R)$ a finite subset and $\ell \geq 6 \,
d\, \delta(X)$.  Then there exists $g \in \R[x_{1},\dots, x_{d}]
_{\leq \ell}$ with $\dim(X\cap V(g)) =\dim(X)-1$ such that, for each $
\gamma \in \{\pm 1\}^{\irr(g)}$,
\begin{equation*}
  \card( \mathcal{P}(
  \gamma
  ))  = O_{d}\Big(
  \frac{\card(\mathcal{P})}{\deg(X) \ell^{\dim(X)}}\Big).  
\end{equation*}
\end{theorem}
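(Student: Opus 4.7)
My plan is to imitate the iterative Guth--Katz bisection argument, but with the ambient space of real polynomials of degree $\leq \ell$ replaced by the image of this space in the affine coordinate ring of $X$. I build $g=g_{1}g_{2}\cdots g_{k}$ step by step, where each $g_{j}\in\R[\bfx]$ of degree at most $d_{j}$ is produced by the polynomial ham sandwich theorem of Stone--Tukey applied to the at most $2^{j-1}$ sign classes already carved out by $g_{1}\cdots g_{j-1}$. After $k$ steps the number of sign classes realized by $\irr(g)$ is at most $2^{k}$ and each class contains at most $\card(\cP)/2^{k}$ points of $\cP$, while $\deg(g)\le\sum_{j}d_{j}$.

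To apply ham sandwich at step $j$, I need a real vector space of polynomials of degree $\leq d_{j}$ of $\R$-dimension at least $2^{j-1}+1$, whose elements do not all vanish identically on $X$. The natural candidate is the image of $\R[\bfx]_{\leq d_{j}}$ modulo the polynomials vanishing on $X$; after homogenizing and comparing with the projective closure $\ov{X}\subset\PP^{d}(\C)$, its dimension agrees with the complex Hilbert function $\Hf_{I(\ov{X})}(d_{j})$. The codimension-two lower bounds of Proposition \ref{prop:2} then govern how small $d_{j}$ can be: depending on whether $d_{j}$ lies in the regime $d_{j}<\delta_{1}$, $\delta_{1}\leq d_{j}<\delta_{2}$, or $d_{j}\geq\delta_{2}$, the smallest admissible $d_{j}$ is of order $(2^{j-1})^{1/d}$, $(2^{j-1}/\delta_{1})^{1/(d-1)}$, or $(2^{j-1}/(\delta_{1}\delta_{2}))^{1/(d-2)}$ respectively, where $\delta_{1}\leq\delta_{2}=\delta(X)$ as in Proposition \ref{prop:2}.

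The heart of the argument is bounding $\sum_{j}d_{j}$ by $\ell$. In each of the three regimes the sequence $(d_{j})$ grows geometrically with common ratio $2^{1/d}$, $2^{1/(d-1)}$, or $2^{1/(d-2)}$, so the partial sums are dominated by the last term times a constant depending only on $d$. Once the iteration has entered the third regime, $d_{k}=O_{d}\bigl((2^{k}/(\delta_{1}\delta_{2}))^{1/(d-2)}\bigr)$ and $\sum_{j}d_{j}=O_{d}(d_{k})$. Enforcing $\sum_{j}d_{j}\leq\ell$ and using that $\delta_{1}\delta_{2}$ is comparable to $\deg(X)$ (by Lemma \ref{lem:codim2} together with the upper bound $\delta_{1}\delta_{2}\leq d(d-1)\deg(X)$ derived from Proposition \ref{prop:1} inside the proof of Proposition \ref{prop:2}) then forces $2^{k}\geq c\,\deg(X)\,\ell^{\dim(X)}$ for some $c=c(d)>0$, yielding the claimed bound on $\card(\cP(\gamma))$.

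The condition $\dim(X\cap V(g))=\dim(X)-1$ comes for free, since at every step I pick $g_{j}$ as a nonzero element of the quotient of $\R[\bfx]_{\leq d_{j}}$ by the polynomials vanishing on $X$; consequently $V(g_{j})\cap X$ is a proper closed subvariety of the irreducible variety $X$, hence of codimension exactly one, and so is the union $V(g)\cap X=\bigcup_{j}V(g_{j})\cap X$. I expect the main obstacle to lie in the bookkeeping of the third paragraph: the sum $\sum_{j}d_{j}$ must be threaded through the three different regimes of the Hilbert function, and the constants coming from the three geometric progressions together with the thresholds $\delta_{1}$, $\delta_{2}$ appearing in Proposition \ref{prop:2} must all be absorbed inside the allowance $\ell\geq 6d\delta(X)$ furnished by the hypothesis.
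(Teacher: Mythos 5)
Your proposal is correct in outline and follows the same architecture as the paper's proof: iterated Stone--Tukey bisection, with the admissible degrees $d_j$ dictated by the three-regime lower bound of Proposition \ref{prop:2}, geometric summation of the degrees within each regime, and Lemma \ref{lem:codim2} to convert $\delta_1\delta_2$ into $\deg(X)$ at the end. The one genuine difference is the space in which you apply the ham sandwich theorem: you work modulo the polynomials vanishing on the complex variety $X$, so the lower bound of Proposition \ref{prop:2} applies directly, whereas the paper works in the affine hull of the Veronese image of $X(\R)$, i.e.\ modulo the ideal of the \emph{real} points, whose Hilbert function is only bounded above by that of $I(X)$; this is what forces the paper into its case distinction (if the two Hilbert functions differ at some level, a single low-degree polynomial vanishing on $X(\R)$ but not on $X$ finishes the proof trivially). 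Your route legitimately avoids that dichotomy: a bisecting $g_j$ that happens to vanish on all of $X(\R)$ merely empties the sign classes (points on zero sets lie in no $\cP(\gamma)$), so the counting is unaffected, while $g_j\notin I(X)$ --- hence $\dim(X\cap V(g))\le \dim(X)-1$ --- is automatic from your choice of space; what you buy is a cleaner argument, what the paper's route buys is a bisecting polynomial nontrivial on $X(\R)$ at every step. Two small cautions: run the Borsuk--Ulam/limiting argument on the unit sphere of a fixed real complement of $(I(X)\cap\R[\bfx])_{\le d_j}$, so the output is a nonzero element of that complement (a formulation via an affine hyperplane in the target of the evaluation map would not by itself exclude $g_j\in I(X)$); and when $X$ is not defined over $\R$ the real dimension of your quotient need not \emph{equal} $\Hf_{I(\ov{X})}(d_j)$, but it is at least that, which is all you need --- while, as in the paper, asserting equality $\dim(X\cap V(g))=\dim(X)-1$ rather than an inequality tacitly uses that the intersection is nonempty.
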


\begin{remark} \label{rem:1} Let $S\subset \R^{d}$ be an arbitrary
  subset. For each connected component $C$ of $S\setminus V(g)$, the
  set $\cP\cap C$ is contained in a set of the form $\cP(\gamma)$ with
  $\gamma\in\{\pm1\}^{ \irr(g)}$. Hence, Theorem \ref{thm:3} in the introduction
  follows  from Theorem \ref{thm:polynomial-partitioning}
  above by choosing $S=\R^{d}$. 
\end{remark}

Given $\ell \ge 0$, we denote by $v_{\ell } $ the Veronese embedding
$\C^{d}\hookrightarrow \C^{{\ell +d\choose d}-1}$ given, for a point $\bfp=(p_{1},\dots, p_{d})\in \C^{d}$, by 
\begin{equation}
  \label{eq:1}
  v_{\ell }(\bfp)= (\bfp^{\bfa})_{\bfa}
\end{equation}
where $\bfa=(a_{1},\dots, a_{d}) \in \N^{d}$ runs over all nonzero
vectors of length $|\bfa|=\sum_{i}a_{i}$ bounded by $\ell$, and where
$\bfp^{\bfa}$ denotes the monomial $p_{1}^{a_{1}}\dots p_{d}^{a_{d}}$.
We also denote by $\iota$ the standard inclusion $\C^{d}\to \PP^{d}(\C)$
given by
\begin{equation*}
  \iota(\bfp)=(1:p_{1}:\dots:p_{d}). 
\end{equation*}

For a subset $E\subset \R^{d}$, we denote by $\aff(E)$ the smallest
affine subspace 
(or flat) 
of $\R^{d}$ containing $E$.  We also denote by
$I({\iota(E)}) \subset \C[z_{0},\dots, z_{d}]$ the homogeneous ideal
of polynomials vanishing identically on the subset
${\iota(E)}\subset\PP^{d}(\C)$.


\begin{lemma}
  \label{lemm:1}
With notation as above, let $E\subset \R^{d}$ be a subset and $\ell\ge 0$. Then
\begin{displaymath}
\dim_{\R}(\aff(v_{\ell }(E)))= \Hf_{I({\iota(E)})}  (\ell )-1.
\end{displaymath}
\end{lemma}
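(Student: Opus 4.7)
My plan is to rewrite both sides of the claimed equality as ${\ell+d\choose d}-1$ minus the dimension of one and the same explicit vector space, namely the space of polynomials in $\R[\bfx]_{\le\ell}$ that vanish identically on $E$.

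First I would reformulate the left-hand side by affine duality. Setting $N={\ell+d\choose d}-1$, every subset $S\subset\R^{N}$ satisfies
\begin{displaymath}
  \dim_{\R}\aff(S)=N-\dim_{\R}\{L\colon L|_{S}\equiv 0\},
\end{displaymath}
where $L$ ranges over the $\R$-affine functionals on $\R^{N}$. Writing such a functional as $L(\bfy)=c_{0}+\sum_{1\le|\bfa|\le\ell}c_{\bfa}y_{\bfa}$ and evaluating on $v_{\ell}(\bfp)$, the condition that $L$ vanish on $v_{\ell}(E)$ translates into $g|_{E}\equiv 0$ for the associated polynomial $g(\bfx)=c_{0}+\sum_{\bfa}c_{\bfa}\bfx^{\bfa}\in\R[\bfx]_{\le\ell}$. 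Hence these functionals are in $\R$-linear bijection with the subspace $J_{\le\ell}\subset\R[\bfx]_{\le\ell}$ of polynomials vanishing on $E$.

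Next I would rewrite the right-hand side. By the definition of the Hilbert function,
\begin{displaymath}
\Hf_{I(\iota(E))}(\ell)={\ell+d\choose d}-\dim_{\C}I(\iota(E))_{\ell}.
\end{displaymath}
Because $\iota(E)\subset\PP^{d}(\R)$, the ideal $I(\iota(E))$ is stable under complex conjugation and is the complexification of its real part, so $\dim_{\C}I(\iota(E))_{\ell}=\dim_{\R}(I(\iota(E))\cap\R[\bfz])_{\ell}$. The standard dehomogenization $f(\bfz)\mapsto f(1,x_{1},\dots,x_{d})$ is an $\R$-linear isomorphism $\R[\bfz]_{\ell}\cong\R[\bfx]_{\le\ell}$ that carries real homogeneous forms vanishing on $\iota(E)$ precisely onto the subspace $J_{\le\ell}$, whence $\dim_{\C}I(\iota(E))_{\ell}=\dim_{\R}J_{\le\ell}$.

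Combining the two reductions yields
\begin{displaymath}
\dim_{\R}\aff(v_{\ell}(E))=N-\dim_{\R}J_{\le\ell}={\ell+d\choose d}-1-\dim_{\C}I(\iota(E))_{\ell}=\Hf_{I(\iota(E))}(\ell)-1,
\end{displaymath}
as required. The only delicate point is the compatibility between complex and real scalars, which is harmless here because the vanishing ideal of a real point set is $\R$-rational; beyond this I do not foresee any genuine obstacle.
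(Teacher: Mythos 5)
Your proof is correct and follows essentially the same route as the paper: both arguments rest on the fact that the vanishing ideal of a set of real points is defined over $\R$, and then identify the degree-$\ell$ graded piece of that ideal with the space of functionals (affine functionals on $\R^{N}$ in your version, the annihilator of $\{1\}\times v_{\ell}(E)$ under a pairing on $\R[\bfz]_{\ell}$ in the paper's) cutting out the affine hull of $v_{\ell}(E)$. Your use of dehomogenization $\R[\bfz]_{\ell}\cong\R[\bfx]_{\le\ell}$ is just a cosmetic repackaging of the paper's bookkeeping with the linear span of $\{1\}\times v_{\ell}(E)$, so there is nothing to add.
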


\begin{proof}
The ideal $I({\iota(E)})$ is
  generated over $\R[\bfz]$, because it is defined by the vanishing of
  a set of real points.  Setting $I=I({\iota(E)})\cap
  \R[\bfz]$,  we have 
\begin{equation}
  \label{eq:17}
\Hf_{I({\iota(E)})}  (\ell ) = \dim_{\C}\big( \C[\bfz]/I({\iota(E)})\big)_{\ell}
= \dim_{\R}( \R[\bfz]_{\ell })-\dim_{\R}(I_{\ell }),
\end{equation}
where $I_{\ell}$ denotes the $\ell$-th graded part of $I$. 

Consider the Euclidean space $\R^{\ell +d\choose d}$ with coordinates indexed by the vectors of
$\N^{d+1}$ of length equal to $\ell $, and  the pairing defined by
\begin{equation}
  \label{eq:15}
  \R[\bfz]_{\ell }\times \R^{\ell +d\choose d} \longrightarrow \R ,\quad
  \bigg(\sum_{|\bfb|=\ell }\alpha_{\bfb}\bfz^{\bfb},\bfw\bigg)\longmapsto  \sum_{\bfb}\alpha_{\bfb}\bfw_{\bfb}, 
\end{equation}
where $\bfb$ runs over all vectors of $\N^{d+1}$ of length $\ell $.

Consider the subset $\{1\}\times v_{\ell }(E) \subset \R^{\ell
  +d\choose d}$, with $1\in \R$ and $v_{\ell }(E) $ the image of $E$
under the Veronese embedding \eqref{eq:1}. 
The graded part $I_{\ell}$ coincides with the annihilator of this
subset with respect to the pairing \eqref{eq:15}. Since
$I_{\ell}$ is a linear subspace, it also coincides with the
annihilator of the linear span in $ \R^{\ell +d\choose d} $ of this
subset. Denote by $\lin(\{1\}\times v_{\ell }(E))$ this linear span,
which is a linear space  containing $ \{1\}\times \aff( v_{\ell
}(E))$ as an affine hyperplane. Hence
\begin{equation}\label{eq:18}
 \dim_{\R}( \R[\bfz]_{\ell })-\dim_{\R}(I_{\ell })
= \dim_{\R}( \lin(\{1\}\times
v_{\ell }(E))) 
 = 
\dim_{\R}(\aff(v_{\ell }(E)))+1.
\end{equation}
The result then follows from \eqref{eq:17} and \eqref{eq:18}.
\end{proof}

\begin{proof}[Proof of Theorem \ref{thm:polynomial-partitioning}]
We assume that $X$ is of codimension 2. 
  Let $\delta_{1}\ge 1$ be  the minimal degree of a hypersurface of
  $\PP^{d}(\C)$ containing $X$ and set also
  $\delta_{2}=\delta(X)$. Let $\eta\ge \delta_{2}$ be an integer to be fixed later on.

  Let $c=c(d)$ be the constant in Proposition \ref{prop:2} and set
  $c_{1}=\min\{c, 2^{-d}\}$. Put
\begin{equation*}
s_{0} =   \log(c_{1}\delta_{1}^{d}), \quad
s_{1} =  \log(c_{1}\delta_{1}\delta_{2}^{d-1}), \quad
t =  \lfloor \log(c_{1}\delta_{1}\delta_{2}\eta^{d-2})\rfloor
\end{equation*}
and 
  \begin{equation*}
    \ell_{i}=
    \begin{cases}
 \lfloor      (c_1^{-1}2^{i})^{\frac{1}{d}}  \rfloor & \text{ for } 
      0 \leq i < s_{0}, \\ 
 \lfloor      (c_1^{-1}\delta_{1}^{-1}2^{i})^{\frac{1}{d-1}}  \rfloor & \text{ for } 
      s_{0} \leq i < s_{1}, \\ 
 \lfloor      (c_1^{-1}\delta_{1}^{-1}\delta_{2}^{-1}2^{i})^{\frac{1}{d-2}}  \rfloor & \text{ for } 
      s_{1} \leq i \leq t, \\ 
    \end{cases}
  \end{equation*}
  where $\lfloor \cdot \rfloor$ denotes the floor function. We verify
  that the following conditions hold:
    \begin{equation}
      \label{eq:30}
      \begin{aligned}
& \text{ if }   0 \leq i < s_{0} \text{ then } 1\le \ell_{i}\le
  \delta_{1}-1,\\
& \text{ if }   s_{0} \leq i < s_{1} \text{ then } \delta_{1}\le \ell_{i}\le \delta_{2}-1,\\
& \text{ if }   s_{1} \leq i \le
  t \text{ then } \delta_{2}\le \ell_{i}\le \eta      .
      \end{aligned}
    \end{equation}

 Let $v_{\ell_{i}}$ be the Veronese map of degree $\ell_{i}$ as in
 \eqref{eq:1} and set $A_{i}\subset \R^{{\ell_{i}+e\choose e}} - 1$ for
 the affine hull of the image of $X(\R)$ under $v_{\ell_{i}}$. Let $I({\iota(X(\R))})$ be the ideal of polynomials vanishing on
  the image under $\iota$ of the set of real points of $X$.  
By Lemma \ref{lemm:1},
  \begin{equation}\label{eq:4}
    \dim_{\R}(A_{i}) = \Hf_{I({\iota(X(\R))})}(\ell_{i})-1.
  \end{equation}
Since
  ${\iota(X(\R))} \subset \iota(X)$, we have that $I({\iota(X(\R))})
  \supset I( \iota(X))$ and so
\begin{equation} \label{eq:9}
  \Hf_{I({\iota(X(\R)}))}(\ell_{i})\le   \Hf_{I(\iota({X}))}(\ell_{i}). 
\end{equation}

We consider first the case when \eqref{eq:9} is an equality for all
$i$.  Since the affine variety $X$ is irreducible and partially
defined at degree $\delta(X)$, the same holds for $\ov {\iota(X)}$,
the Zariski closure of $\iota(X)$ in projective space. It follows from
Proposition \ref{prop:2} and the conditions in \eqref{eq:30} that
\begin{equation}
  \label{eq:21}
\Hf_{I( {\iota(X(\R))})}(\ell_{i})\ge 2^{i}+1, \quad i=0,\dots, t.
\end{equation}

As in the Guth-Katz polynomial partitioning, we
will inductively subdivide the set of points $\cP$. We start with
$\cC_{0}=\{\cP\}$. Having constructed $\cC_{i}$ with at most $2^{i}$
sets, we apply the ham sandwich theorem to the image of these sets
under the map $v_{\ell_{i}}$. These images lie in $A_{i}$ and, by
\eqref{eq:4} and \eqref{eq:21}, this is an affine space of dimension
$\ge 2^{i}$. Hence, there is a nonzero linear form on $A_{i}$ that
bisects each of these images or, equivalently, there is a polynomial
$g_{i}\in \R[x_{1},\dots, x_{d}]_{\le \ell_{i}}$ bisecting each of the
sets in $\cC_{i}$.

  For each  $\cQ\in \cC_{i}$, we put $\cQ^{+}$ and $\cQ^{-}$ for
  the sets of points of $\cQ$ at which $g_{i}>0$ and $g_{i}<0$,
  respectively. We then put
  \begin{displaymath}
    \cC_{i+1}=\bigcup_{\cQ\in \cC_{i}} \{\cQ^{+}, \cQ^{-}\}.
  \end{displaymath}
Hence, each of the sets in $\cC_{t}$ has 
cardinality bounded by $2^{-t}{\card( \cP)}$.

Set $g=\prod_{i=0}^{t}g_{i}$. 
To bound the degree of $g$, we write  $  \deg(g) =S_{0}+S_{1}+S_{2}$ with 
\begin{equation*}
  S_{0}= \sum_{0\le i<s_{0}}\ell_{i}, \quad
  S_{1}= \sum_{s_{0}\le i<s_{1}}\ell_{i}, \quad
  S_{2}= \sum_{s_{1}\le i\le t}\ell_{i}.
\end{equation*}
We have that 
\begin{equation*}
  S_{0}\le \sum_{i=0}^{s_{0}-1} (c_{1}^{-1}2^{i})^{\frac{1}{d}}\le
  c_{1}^{-\frac{1}{d}} \frac{2^{\frac{s_{0}+1}{d}}-1}{2^{\frac{1}{d}}-1}
  \le \frac{2^{\frac{1}{d}}}{2^{\frac{1}{d}}-1} \delta_{1}.
\end{equation*}
Similarly, one can verify that
\begin{equation*}
S_{1}  \le \frac{2^{\frac{1}{d-1}}}{2^{\frac{1}{d-1}}-1} \delta_{2}
 \quad \text{ and }
\quad S_{2}\le  \frac{2^{\frac{1}{d-2}}}{2^{\frac{1}{d-2}}-1} \eta.
\end{equation*}
Using that $d\ge 3$ and $\delta_{1}\le \delta_{2}$, we deduce that $
\deg(g) \le 4\, d\, \delta_{2} +  2\, d\, \eta$.
Finally, set
\begin{equation*}
\eta=\frac{\ell}{2d} -2\delta_{2}.
\end{equation*}
Since $\ell\ge 6\, d\, \delta_{2}$, we have that $\eta\ge \delta_{2}$
as required and $\deg(g)\le \ell$, as stated. 

On the other hand, the sets in $\cC_{t}$ are realized by sign
conditions given in terms of the $g_{i}$'s.  The sets realized by sign
conditions on $\irr(g)$ have cardinality bounded by those in
$\cC_{t}$.  Since $\eta\ge \frac{\ell}{6d}$, it follows that for each
$ \gamma \in \{\pm 1\}^{\irr(g)}$,
\begin{equation*}
  \card( \mathcal{P}(
  \gamma
  ))  \le \frac{\card( \cP)}{2^{t}} \le  c_{2}
 \frac{\card( \cP)}{ \deg(X) \eta^{d-2}}, 
\end{equation*}
where the last inequality follows from 
Lemma \ref{lem:codim2},
and
$c_{2}$ denotes a suitable constant. This proves the statement in the
case when $X$ is of codimension 2 and the  inequality \eqref{eq:9} are
equalities for all $i$. 

If the inequality \eqref{eq:9} is strict for some
$i$, then there is a polynomial $g_{i} \in I({X(\R)}) \setminus I(X) $
of degree bounded by $\ell_i\le \ell $.  Hence, the hypersurface
$V(g_{i})$ cuts $X$ properly and contains its set of real points. In
particular, $\cP\subset V(g_{i})$. It follows that $g=g_{i}$ has the
appropriate degree and $\cP(\gamma)=\emptyset$ for all $\gamma\in
\{\pm1\}^{\irr(g)}$, which completes the proof for the case when $X$
is of codimension two. 

The cases when the codimension of $X$ is either zero or one  are
simpler and can be proven as in \cite{GK:eddpp, Zahl:ibnpsitd,
  KMSS:udtd}.
\end{proof}
 
A previous version of this paper contained a polynomial partitioning
theorem on varieties of arbitrary dimension. Whereas the proof of this result
contained a gap, we still think that its statement is correct, and we
propose it as a conjecture.

\begin{conjecture} \label{conj:4}
There is a constant $c=c(d)>0$ with the following property. 
Let $X\subset \C^d$ be an irreducible variety of dimension $e$, $\mathcal{P} \subset X(\R)$ a finite subset and $\ell \geq c \,
\delta(X)$.  Then there exists $g \in \R[x_{1},\dots, x_{d}]
_{\leq \ell}$ with $\dim(X\cap V(g)) =\dim(X)-1$ such that, for each $
\gamma \in \{\pm 1\}^{\irr(g)}$,
\begin{equation*}
  \card( \mathcal{P}(
  \gamma
  ))  \le c\, 
  \frac{\card(\mathcal{P})}{\deg(X) \ell^{e}}.  
\end{equation*}
\end{conjecture}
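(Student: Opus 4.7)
The plan is to adapt the proof of Theorem~\ref{thm:polynomial-partitioning} to the general codimension-$r$ case with $r=d-e$. The skeleton of the argument is unchanged: iteratively bisect the point set via the Veronese map and the ham sandwich theorem, multiply the bisecting polynomials together, and read off the sign-pattern bound. The single new ingredient is a multi-phase lower bound for $\Hf_{I(X)}$, generalizing Proposition~\ref{prop:2} from $2$ phases (the codimension-$2$ case) to $r+1$ phases.

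First I would fix a chain of defining degrees $1=\delta_{0}\le\delta_{1}\le\cdots\le\delta_{r}=\delta(X)$, where $\delta_{i}$ is the smallest integer for which there exist $f_{1},\dots,f_{i}\in I(X)$ with $\deg(f_{j})\le\delta_{j}$ such that $X$ is an irreducible component of codimension $i$ of $V(f_{1},\dots,f_{i})$. Inductive application of B\'ezout's theorem then yields $\deg(X)\le \delta_{1}\cdots\delta_{r}$, extending Lemma~\ref{lem:codim2}. The conjectural Hilbert function bound to establish is
\begin{equation*}
\Hf_{I(X)}(\ell)\ge c\,\delta_{1}\cdots\delta_{i}\,(\ell+1)^{d-i}+1 \quad\text{for } \delta_{i}\le\ell<\delta_{i+1},\ 0\le i\le r-1,
\end{equation*}
together with $\Hf_{I(X)}(\ell)\ge c\,\delta_{1}\cdots\delta_{r}\,(\ell+1)^{d-r}+1$ for $\ell\ge\delta_{r}$. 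The last phase follows from Theorem~\ref{thm:1}\eqref{item:3} combined with the above B\'ezout bound; the intermediate phases would be handled by computing the Hilbert function of the truncated ideal $(f_{1},\dots,f_{i})$ via a Koszul-type resolution of $\C[\bfz]/(f_{1},\dots,f_{i})$, together with the equality $I(X)_{\ell}=(f_{1},\dots,f_{i})_{\ell}$ that should hold for $\ell<\delta_{i+1}$.

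Granted this, the partitioning argument runs in parallel with the proof of Theorem~\ref{thm:polynomial-partitioning}. I would set breakpoints $s_{i}=\lfloor\log(c\,\delta_{1}\cdots\delta_{i}\,\delta_{i+1}^{d-i})\rfloor$ for $0\le i<r$, a final breakpoint $t=\lfloor\log(c\,\delta_{1}\cdots\delta_{r}\,\eta^{d-r})\rfloor$ for an auxiliary $\eta\ge\delta_{r}$, and choose $\ell_{j}$ at each level so that $\Hf_{I(X(\R))}(\ell_{j})\ge 2^{j}+1$. At level $j$, the ham sandwich theorem applied to the image of the current partition under $v_{\ell_{j}}$ produces $g_{j}\in\R[x_{1},\dots,x_{d}]_{\le\ell_{j}}$ bisecting every current piece, so after $t$ levels each piece contains at most $2^{-t}\card(\mathcal{P})$ points. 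The sum $\sum_{j}\ell_{j}$ splits into $r+1$ geometric pieces, each contributing $O_{d}(\delta_{i+1})$ or $O_{d}(\eta)$, so choosing $\eta\approx\ell/(2d)$ keeps $\deg(\prod_{j}g_{j})\le\ell$ once $\ell\ge c\,\delta(X)$. The B\'ezout estimate $\delta_{1}\cdots\delta_{r}\ge\deg(X)$ then converts $2^{-t}$ into $O_{d}(1/(\deg(X)\ell^{e}))$. The case in which $\Hf_{I(X(\R))}(\ell_{j})<\Hf_{I(X)}(\ell_{j})$ at some level is disposed of exactly as in Theorem~\ref{thm:polynomial-partitioning}, by taking $g$ to be the witness polynomial that cuts $X$ properly and vanishes on $X(\R)$.

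The hard part is the multi-phase Hilbert function lower bound itself. The codimension-$2$ argument uses only the single short exact sequence induced by multiplication by $f_{1}$, for which no regularity assumption beyond $f_{1}\ne 0$ is required. For $r\ge 3$ one would like exactness of the Koszul complex on $(f_{1},\dots,f_{i})$, which demands that these polynomials form a regular sequence—a condition that is not automatically met when the $\delta_{j}$ are chosen minimally and the intermediate varieties $V(f_{1},\dots,f_{j})$ may carry extra components disjoint from $X$. This is precisely where the earlier version of the paper had a gap. A viable route would be to perturb the $f_{j}$ generically, at the hopefully controlled cost of slightly increasing the $\delta_{j}$, or to replace the naive Koszul bookkeeping with a local cohomology computation that remains valid without regularity. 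Making either strategy work is what would upgrade Conjecture~\ref{conj:4} to a theorem.
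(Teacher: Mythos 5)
The statement you set out to prove is recorded in the paper as Conjecture~\ref{conj:4} precisely because the authors' own earlier argument for it contained a gap; there is no proof in the paper to compare against. Your proposal reconstructs the natural strategy---run the Veronese/ham-sandwich bisection of Theorem~\ref{thm:polynomial-partitioning} verbatim and feed it a multi-phase lower bound for $\Hf_{I(X)}$ generalizing Proposition~\ref{prop:2}---but it does not close that gap, as you yourself concede in the final paragraph. What you have is a reduction of the conjecture to an unproven Hilbert-function estimate, i.e.\ a restatement of the open problem, not a proof.

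Three concrete points. First, the truncation identity $I(X)_{\ell}=(f_{1},\dots,f_{i})_{\ell}$ for $\ell<\delta_{i+1}$, which you need before any Koszul computation, is only justified in codimension two: there, an element $g\in I(X)_{\ell}\setminus(f_{1})_{\ell}$ either exhibits $X$ as a component of $V(f_{1},g)$, contradicting the minimality of $\delta_{2}=\delta(X)$, or forces a common factor of $f_{1}$ and $g$ vanishing on $X$, which by minimality of $\delta_{1}$ must be $f_{1}$ itself. For $i\ge 2$ no such dichotomy exists: $X$ can lie inside a component of $V(f_{1},\dots,f_{i},g)$ of codimension $\le i$ without $g$ belonging to the ideal $(f_{1},\dots,f_{i})$, so the minimality of $\delta_{i+1}$ gives no control on the intermediate graded pieces of $I(X)$. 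Second, even granting that identity, the Koszul bookkeeping needs $f_{1},\dots,f_{i}$ to form a regular sequence, which need not be arrangeable at the minimal degrees $\delta_{j}$; this is exactly the obstruction you flag, and neither the ``generic perturbation'' nor the ``local cohomology'' alternative is carried out or even shown to be plausible with controlled degrees. Third, your last phase is claimed to follow from Theorem~\ref{thm:1}\eqref{item:3} ``combined with the B\'ezout bound,'' but the inequality $\deg(X)\le\delta_{1}\cdots\delta_{r}$ points the wrong way there: to deduce $\Hf_{I(X)}(\ell)\ge c\,\delta_{1}\cdots\delta_{r}(\ell+1)^{d-r}$ from the lower bound $\Hf_{I(X)}(\ell)\ge\deg(X)\binom{\ell-(d-e)(\delta(X)-1)+e}{e}$ you need $\delta_{1}\cdots\delta_{r}=O_{d}(\deg(X))$, i.e.\ a higher-codimension analogue of Proposition~\ref{prop:1}, which is not available (in codimension two it comes from Chardin's upper bound and is special to two forms). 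One can partially reorganize the last phase around $\deg(X)$ itself, but only if the intermediate phases are in place---and those are the unproven heart of the matter. So the partitioning skeleton is fine and matches the codimension-two proof, but the analogue of Proposition~\ref{prop:2} in codimension $r\ge 3$ remains open, and with it the conjecture.
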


\section{Point-hypersurface incidences} 
\label{sec:proof-theor-refthm:m}

In this section we prove Theorem \ref{thm:main}. To this end, we use
three levels of polynomial partitioning. This leads to a
partition
of the Euclidean space $\R^4$ into semi-algebraic
pieces of various dimensions. We bound separately the number of
incidences contributed by the points of the set $\mathcal{P}$ in each
piece. The contribution from each level of the partitioning is 
essentially the same, up to constant factors, as the claimed bound.

\begin{proof}[Proof of Theorem \ref{thm:main}]
  The procedure performed at each level is similar. For clarity and
  ease of exposition, we prefer to describe each of these level
  separately, even at the expense of repeating some of the arguments.

The \emph{set of incidences} between $\cP$ and $\cH$ is the subset of
$\cP\times \cH$ defined by
\[
\cI(\cP,\cH) = \{ (p,H) \in \cP\times \cH \mid p \in H \}.
\] 
Hence $I(\cP,\cH) = \card(\cI(\cP,\cH))$.  For a subset $\cQ \subset
\cP$, we denote by
 \begin{align*}
 \cI_{<k}(\cQ, \cH) &= \{ (p,H) \in \cI(\cQ, \cH)  \mid  \card(H \cap \cQ) < k \}, \\ 
 \cI_{\geq k}(\cQ, \cH) &= \{ (p,H) \in \cI(\cQ, \cH)   \mid\card(H \cap \cQ) \geq k \}
 \end{align*}
 the set of incidences between $\cQ$ and hypersurfaces of $\cH$
 containing at most $k-1$ points of $\cQ$ and at least $k$ points of
 $\cQ$, respectively. We also set $ I_{<k}(\cQ, \cH) =
 \card(\cI_{<k}(\cQ, \cH))$ and $I_{\geq k}(\cQ, \cH) =
 \card(\cI_{\geq k}(\cQ, \cH))$. Clearly,
\begin{equation}
\label{eq:13}
   I(\cQ,\cH)= I_{< k} (\cQ, \cH)+ I_{\ge k}(\cQ, \cH).
\end{equation}

In the sequel, the dimension $d$ of the ambient space is fixed to $4$.
Hence, all implicit constants in the $O$-notation depend only on the
parameters $k$ and $c$ in the statement of the theorem. 

\subsubsection*{First level partitioning}

Let $D \ge 24$ to be fixed later on. By Theorem \ref{thm:polynomial-partitioning}, there
exists $f \in \R[x_1,x_{2},x_{3},x_4]_{\le D} \setminus \{0\}$ such that, for
each  $\gamma \in \{\pm1\}^{\irr(f)}$,
\begin{equation}
  \label{eqn:firstlevel-right-intermediate1}
\card(\mathcal{P}(\gamma))  = O\Big(  \frac{m}{D^4}\Big),
\end{equation}
where $\cP(\gamma)$ denotes the subset of $\cP$ realized by the signs
$\gamma$ as in \eqref{eqn:sign-pattern}.  Choose a minimal subset
$\Sigma_{1}\subset \{\pm1\}^{\irr(f)}$ realizing all nonempty subsets
of this form.

We partition $\mathcal{P}$ into the disjoint subsets $\mathcal{P}_0 =
\mathcal{P} \cap V(f)$ and $ \mathcal{P}(\gamma) $, $\gamma \in
\Sigma_{1}$.  Set $m_{0}=\card( \cP_{0})$ and $m_\gamma = \card(
\mathcal{P}(\gamma))$ for each $ \gamma$.  Clearly,
\begin{equation*}
m_{0}+ \sum_{\gamma \in \Sigma_{1}} m_{\gamma} = m.
\end{equation*}

We first bound  the number of incidences with hypersurfaces that
contain at least $k$ points in one of the subsets $
\mathcal{P}(\gamma) $.
By the hypothesis \eqref{item:6}, for each
$\gamma \in \Sigma_{1}$ and each subset of $k$ points of
$\mathcal{P}(\gamma)$, there are at most $c$ hypersurfaces in $\cH$
containing these points.  Hence,
\begin{equation}
  \label{eqn:firstlevel-right-intermediate2}
  I_{\ge k}(\mathcal{P}(\gamma),
  \cH) 
  \le c k \binom{m_\gamma}{k} = O(m_{\gamma}^{k}).
\end{equation}
The cardinality of $\Sigma_{1}$ or equivalently, the number of
nonempty subsets of the form $\cP(\gamma)$, is bounded by the number
of connected components of $\R^{4}\setminus V(f)$. By Theorem~\ref{thm:2}, 
applied with $e=0$,
this number is bounded by $O(D^4)$. With
\eqref{eqn:firstlevel-right-intermediate1} and
\eqref{eqn:firstlevel-right-intermediate2}, this implies that
\begin{equation}
\label{eqn:firstlevel-right} 
\sum_{\gamma} I_{\ge k}(\mathcal{P}(\gamma),
\cH) =
O\Big( \sum_{\gamma} \Big(\frac{m}{D^4}\Big)^k \Big)  
= O(m^k D^{4-4k}). 
\end{equation}

We now bound the number of incidences with hypersurfaces that contain
at most $k-1$ points in every $\cP(\gamma)$.  For each $H \in \cH$,
the number of subsets $\cP(\gamma)$ having nonempty intersection with $
H$ is bounded by $\betti(H(\R)\setminus V(f))$.  By Theorem
\ref{thm:2}, this number of connected components is bounded by $
O(D^3)$, because the degree of $H$ is bounded by a constant.  Hence
$\sum_{\gamma} I_{< k} (\mathcal{P}(\gamma), \{H\})\le (k-1) \betti(H
\setminus V(f)) = O(D^{3})$. It follows that
\begin{equation}
\label{eqn:firstlevel-left}
\sum_{\gamma} I_{< k}(\cP(\gamma),\cH) = O(nD^{3}).  
\end{equation}
From \eqref{eq:13}, \eqref{eqn:firstlevel-right} and  \eqref{eqn:firstlevel-left} we
deduce that
\begin{equation}
\label{eqn:firstlevel-intermediate}
{I}(\cP\setminus \cP_{0},\cH) =\sum_{\gamma} I(\cP({\gamma}),\cH) = O(
 n D^3 + m^k D^{4-4k} ).
\end{equation}

We then set
\begin{equation}
 \label{eqn:firstlevel-D}
D = \max\Big(24,  \frac{m^{\alpha_1}}{n^{\beta_1}}\Big) \quad  \text{ with }  \alpha_1 = \frac{k}{4 k
  -1} \text{ and } \beta_1 = \frac{1}{4 k
  -1}.
\end{equation}
If $D=24$, then $m^{\alpha_1} n^{-\beta_1}\le 24$ and so $m^{k}=
O(n)$. In this case, it follows from
\eqref{eqn:firstlevel-intermediate} that ${I}(\cP\setminus
\cP_{0},\cH)= O(n+m^{k})=O(n)$. Otherwise,
\begin{equation*}
  {I}(\cP\setminus \cP_{0},\cH)=   O(  m^{3\alpha_{1}}n ^{1 - 3 \beta_{1}}) =O(m^{1 - \frac{k-1}{4 k -1}} n ^{1 - \frac{3}{4k -1}}).
\end{equation*}
In either case, 
\begin{equation}
\label{eqn:firstlevel}
{I}(\cP\setminus \cP_{0},\cH) =  O(m^{1 - \frac{k-1}{4 k -1}} n ^{1 - \frac{3}{4k -1}}+n).
\end{equation}

\subsubsection*{Second level partitioning}

Let $V(f)= \bigcup_{i\in I} V_{i}$ be the decomposition of the
hypersurface $V(f)$ into irreducible components. Set
$D_{i}=\deg(V_{i})$ for each $i\in I$. Then
\begin{equation}\label{eqn:secondlevel-bezout}
  \sum_{i\in I}D_{i}=\deg(V(f)) \le D.
\end{equation}
We choose a partition of the finite set $\mathcal{P}_0=\cP\cap
V(f)$ into disjoint subsets $\mathcal{Q}_{i}$,
$i \in I$, by assigning each point in $\mathcal{P}_{0}$ to one of the
subsets $\mathcal{Q}_{i}$ corresponding to an irreducible component
$V_{i}$ it belongs to.  Set $l_{i} = \card( \mathcal{Q}_{i})$ for
each $i\in I$. Then 
\begin{equation}
  \label{eqn:secondlevel-aggregrate}
\sum_{i}l_{i}=m_{0}.  
\end{equation}

Fix $i\in I$ and let $E_i \geq 24 D_i$. By
Theorem \ref{thm:polynomial-partitioning}, there exists $g_{i} \in
\R[x_1,x_{2},x_{3},x_4]_{\le E_{i}}$ such that $\dim(V_{i}\cap
V(g_{i}))= 2$ and, for each $\delta \in \{\pm1\}^{\irr(g_{i})}$,
\begin{equation}
  \label{eqn:secondlevel-right-intermediate1}
\card( \mathcal{Q}_{i}(\delta))  = O\Big(  \frac{l_{i}}{D_{i}E_{i}^3}\Big).
\end{equation}
Choose a minimal subset $\Sigma_{2,i}\subset \{\pm1\}^{\irr(g_{i})}$
realizing all nonempty subsets of the form $\cQ_{i}(\delta)$.

Consider the surface $W_i=V_{i}\cap V(g_{i})=V(f_i,g_i)$ and partition
$\cQ_{i}$ into the disjoint subsets $\cQ_{i,0}=\cQ_{i}\cap W_{i}$ and
$\cQ_{i}(\delta)$, $\delta \in \Sigma_{2,i}$.  We set $l_{i,0}=\card(
\cQ_{i,0})$ and $l_{i,\delta}=\card( \cQ_{i}(\delta))$ for each
$\delta$. Clearly,
\begin{equation*}
 l_{i,0}+\sum_{\delta \in \Sigma_{2,i}}
l_{i,\delta}= l_{i} \quad \text{ and } \quad  
\sum_{i}l_{i,0}= \card\Big(\cP\cap \bigcup_{i}W_{i}\Big).
\end{equation*}

We follow the same approach as in the previous case, and we first
bound the number of incidences with hypersurfaces that contain at
least $k$ points in some $\cQ_{i}(\delta)$.  Similarly as in
\eqref{eqn:firstlevel-right-intermediate2}, the hypothesis
\eqref{item:6} implies that, for each $\delta$,
\begin{equation}
  \label{eqn:secondlevel-right-intermediate2}
  I_{\ge k}(\cQ_{i}(\delta),\cH) \le ck {l_{i,\delta} \choose k} = O(l_{i,\delta}^{k}).
\end{equation}
The cardinality of $\Sigma_{2,i}$ is bounded
by $\betti(V_{i}(\R) \setminus V(g_{i}))$
which, by Theorem \ref{thm:2}, is bounded by
$O(D_{i}E_{i}^{3})$. 
With \eqref{eqn:secondlevel-right-intermediate1} and \eqref{eqn:secondlevel-right-intermediate2}, this implies that
\begin{equation}
  \label{eqn:secondlevel-right}
\sum_{\delta}  I_{\ge k} (\cQ_{i}(\delta),\cH)
=O\Big( \sum_{\delta}   \Big(\frac{l_{i}}{D_{i}E_{i}^3}\Big)^{k}\Big)
= O( l_{i}^k D_i ^{1-k}E_i^{3-3k} ).
\end{equation}

We now bound the number of incidences with hypersurfaces that contain
at most $k-1$ points in every $\cQ_{i}(\delta)$.  Let $H\in \cH$ and,
for the moment, suppose that $V_{i}\not\subset H$.  
Since $V_{i}$ is
an algebraic variety over $\C$ with $\dim(V_{i})=3$, by \cite[Chapter
I, Proposition 7.1]{Hartshorne:ag} we have that either $H\cap V_{i}$
is empty or of dimension 2. Moreover, the degree of $H$ is bounded by
a constant.  The number of subsets of the form $\cQ_{i}(\delta)$ with
nonempty intersection with $H$ is bounded by $\betti((H\cap V_{i})(\R)
\setminus V(g_{i}))$.  By Theorem \ref{thm:2}, this number is bounded
by $O(D_i E_i^2)$.  
If we note by $\cH_{i}$ the set of hypersurfaces
of $\cH$ not containing $V_{i}$, then
\begin{equation*}
\sum_{\delta} I_{< k}(\mathcal{Q}_{i}(\delta), \cH_{i})= O(nD_{i}E_{i}^{2}).
\end{equation*}
On the other hand, by the hypothesis \eqref{item:5}, there are at most
$3$ hypersurfaces $H\in \cH$ containing $V_{i}$, and each of them
contains the $l_{i}$ points of $\cQ_{i}$. Hence
\begin{equation}
  \label{eqn:secondlevel-left-intermediate3}
I_{< k}(\cQ_{i}\setminus \cQ_{i,0}, \cH\setminus \cH_{i})\le I(\cQ_{i}, \cH\setminus \cH_{i})\le  3 l_{i}. 
\end{equation}
By  \eqref{eqn:secondlevel-right} and
\eqref{eqn:secondlevel-left-intermediate3}, 
\begin{equation}
\label{eqn:secondlevel-left-intermediate4}
  I(\cQ_{i}\setminus \cQ_{i,0}, \cH) = \sum_{\delta} I(\cQ_{i}(\delta),
  \cH) = O(n D_i E_i^2 + l_i^k D_i^{1-k } E_i^{3-3k}+l_{i}).
\end{equation}

We set 
\begin{equation}
  \label{eqn:secondlevel-left-intermediate6}
 E_{i}= \max\Big( 24 D_i,  \Big(\frac{l_i}{D_{i}}\Big)^{\alpha_2}
 \frac{1}{n^{\beta_2}}\Big) \quad \text{ with } \alpha_2 =\frac{k}{3k -1} \text{ and }  \beta_2 =
\frac{1}{3 k -1}.
\end{equation}

If $E_i = 24 D_i$, then $ (\frac{l_i}{D_{i}})^{\alpha_2}
{n^{-\beta_2}}\le 24 D_{i}$. In this case, the first term in the
right-hand side of \eqref{eqn:secondlevel-left-intermediate4} controls
the second one. Otherwise, both terms are equal up to a constant
factor. We deduce from \eqref{eqn:secondlevel-left-intermediate4} that
\begin{equation}
\label{eqn:secondlevel-intermediate1}
  I(\cQ_{i}\setminus \cQ_{i,0}, \cH) = 
  \begin{cases}
O(n D_{i}^{3} +l_{i}) & \text{ if } E_i = 24 D_i,\\
    O( n^{1 - 2 \beta_2}  l_i^{2
     \alpha_2} D_i^{1 - 2 \alpha_2} +l_{i}) &
   \text{ otherwise}.
  \end{cases}
\end{equation}
By \eqref{eqn:secondlevel-bezout},
\begin{equation}
  \label{eqn:secondlevel-intermediate2}
  \sum_{i }  n D_{i}^{3}\le nD^{3} = O(m^{1 - \frac{k-1}{4 k
      -1}} n ^{1 - \frac{3}{4k -1}}+n),
\end{equation}
as the term $nD^{3}$ appears in \eqref{eqn:firstlevel-intermediate}
and is accounted for in \eqref{eqn:firstlevel}.  Using the H\"older
inequality as well as
\eqref{eqn:secondlevel-bezout} and \eqref{eqn:secondlevel-aggregrate},
we get
\begin{multline}
  \label{eqn:secondlevel-intermediate3}
  \sum_{i}  n^{1 - 2 \beta_2} l_i^{2
     \alpha_2} D_i^{1 - 2 \alpha_2}  \le 
 n^{1 - 2 \beta_2} \Big( \sum_{i} l_i \Big)^{2
     \alpha_2}   \Big(\sum_{i} D_i \Big)^{1 - 2 \alpha_2} \\ \le 
 n^{1 - 2 \beta_2} m_{0}^{2
     \alpha_2}  D^{1 - 2 \alpha_2} .
\end{multline}

We now substitute
the value of $D$ from \eqref{eqn:firstlevel-D} and those of
$\alpha_1,\alpha_2,\beta_1$ and $\beta_2$ in the above expression. If
$D = 24$, then $m ^{k}= O(n)$ and so $ n^{1 - 2 \beta_2} m_{0}^{2
  \alpha_2} D^{1 - 2 \alpha_2} = n^{1 - 2 \beta_2} m_{0}^{2 \alpha_2}
= O(n)$.  Otherwise,
\begin{equation}
  \label{eqn:secondlevel-intermediate4}
  n^{1 - 2 \beta_2} m_{0}^{2
    \alpha_2}  D^{1 - 2 \alpha_2} \le   n^{1 - 2 \beta_2} m^{2
    \alpha_2}  (m^{\alpha_1} n^{-\beta_1})^{1 -
    2 \alpha_2} = m^{1 - \frac{k-1}{4 k
      -1}} n ^{1 - \frac{3}{4k -1}}.
\end{equation}
It follows from \eqref{eqn:secondlevel-intermediate1}, \eqref{eqn:secondlevel-intermediate2},
\eqref{eqn:secondlevel-intermediate3},
\eqref{eqn:secondlevel-intermediate4} and \eqref{eqn:secondlevel-aggregrate}
that
\begin{align}
I\Big(\cP_{0}\setminus \bigcup_{i}\cQ_{i,0}, \cH\Big)& = 
  \sum_{i}    I(\cQ_{i}\setminus \cQ_{i,0}, \cH)  \nonumber\\
& = O\Big(
  \sum_{i }  n D_{i}^{3} +  \sum_{i} n^{1 - 2 \beta_2}  l_i^{2
     \alpha_2} D_i^{1 - 2 \alpha_2} + \sum_{i}
   l_{i}\Big) \nonumber \\ & = O(
m^{1 - \frac{k-1}{4k-1}} n^{1 - \frac{3}{4 k -1}} +
n+m_{0}).   \label{eqn:secondlevel}
\end{align}

\subsubsection*{Third level partitioning}
For each $i \in I$, let $W_i = \bigcup_{j \in J_i} W_{i,j}$ be the
decomposition of the surface $W_i=V(f_i,g_i)$ into
irreducible components. Set $\Delta_{i,j}=\deg(W_{i,j})$ for each
$j$. By B\'ezout's inequality, 
 \begin{equation}
\label{eq:10}
   \sum_{j\in J_{i}}\Delta_{i,j}= \deg(W_{i})\le D_{i}E_{i}.
 \end{equation}
 We denote by $W_i(\R)_{0}$ and $W_{i,j}(\R)_{0}$ the set of isolated
 points of the semi-algebraic sets $W_i(\R)$ and $W_{i,j}(\R)$,
 respectively.  We then choose an arbitrary partition of the set
 $\cQ_{i,0}= \cQ_{i} \cap W_{i}$ into disjoints subsets $\cR_{i,j}$,
 $j\in J_{i}$, such that
 \begin{equation*}
\cR_{i,j}\subset W_{i,j}(\R) \quad \text{ and } \quad \cR_{i,j} \cap
 W_{i,j}(\R)_{0} \subset W_i(\R)_{0}.
\end{equation*}
 Set
 $e_{i,j}=\card(\cR_{i,j})$ for each $j$. Then 
 \begin{equation}
  \label{eqn:thirdlevel-aggregrate1}
   \sum_{j}e_{i,j}=
   l_{i,0}.
 \end{equation}

 Let $j \in J_i$. Being an irreducible component of
 $W_{i}=V(f_{i},g_{i})$, the variety $W_{i,j}$ is partially defined at
 degree $E_{i}$. Let
$F_{i,j} \ge 24
E_i$, 
to be fixed later on. By Theorem
 \ref{thm:polynomial-partitioning}, there exists $h_{i,j} \in
 \R[x_{1},x_{2},x_{3},x_{4}]_{\le F_{i,j}}$ such that
 $\dim(W_{i,j}\cap V(h_{i,j}))=1$ and, for each $\eta \in
 \{\pm1\}^{\irr(h_{i,j})}$,
\begin{equation}
\label{eqn:thirdlevel-1-right-intermediate1}
\card( \mathcal{R}_{i,j}( \eta))  = O\Big( \frac{ e_{i,j}}{\Delta_{i,j}
  F_{i,j}^2}\Big).
\end{equation}
Similarly as before, choose a minimal subset $\Sigma_{3,i,j}\subset
\{\pm1\}^{\irr(h_{i,j})}$ realizing all nonempty subsets of the form
$\cR_{i,j}(\eta)$.

Consider the curve $Y_{i,j}=W_{i,j}\cap V(h_{i,j})$ and partition
$\cR_{i,j}$ into the disjoint subsets $\cR_{i,j,0}=\cR_{i,j}\cap
Y_{i,j}$ and
$ \cR_{i,j}(\eta)$, 
$\eta \in
\Sigma_{3,i,j}$.  Set also $e_{i,j,0}=\card(\cR_{i,j,0})$ and
$e_{i,j,\eta}=\card( \cR_{i,j}(\eta))$ for each $\eta$. Hence,
\begin{equation*}
e_{i,j,0}+\sum_{\eta\in \Sigma_{3,i,j}}e_{i,j,\eta}= e_{i,j}. 
\end{equation*}

We first bound the number of incidences of $\cR_{i,j}\setminus
\cR_{i,j,0}$ with hypersurfaces that contain at least $k$ points in
some $\cR_{i,j}(\eta)$.  Similarly as for
\eqref{eqn:firstlevel-right-intermediate2} and
\eqref{eqn:secondlevel-right-intermediate2}, the hypothesis
\eqref{item:6} implies that, for each $\eta$,
\begin{equation}
  \label{eqn:thirdlevel-1-right-intermediate2}
  I_{\ge k}(\cR_{i,j}(\eta),\cH) \le ck {e_{i,j,\eta}\choose k} = 
  O(e_{i,j,\eta}^{k}). 
\end{equation} By Proposition \ref{prop:1}, 
after dehomogenizing,
there are coprime polynomials
 $\widetilde{f}_{i,j},\widetilde{g}_{i,j} \in
 \R[x_{1},x_{2},x_{3},x_{4}]$ such that $W_{i,j}$ is an irreducible
 component of the variety $ \widetilde{W}_{i,j} =
 V(\widetilde{f}_{i,j},\widetilde{g}_{i,j})$ and
\begin{equation}
 \label{eq:12}
 \deg(\widetilde{f}_{i,j})   \deg(\widetilde{g}_{i,j}) = O(\Delta_{i,j}).
\end{equation}
Since $W_{i,j}$ is partially defined at degree
$E_{i}$, we can furthermore
deduce 
  that 
$ \deg(\wt f_{i,j}), \deg(\wt g_{i,j})\le
E_{i}$.

The number of nonempty subsets of the form $\cR_{i,j}(\eta)$ is
bounded by the number of connected components of
$\wt{W}_{i,j}(\R)\setminus V(h_{i,j})$, as explained in Remark
\ref{rem:1}. By Theorem \ref{thm:2} and \eqref{eq:12}, this number of
connected components is bounded by
\begin{equation}
\label{eqn:thirdlevel-1-right-intermediate3}
\betti(\wt{W}_{i,j}(\R)\setminus
V(h_{i,j}))
= 
O(\deg(\widetilde{f}_{i,j})
\deg(\widetilde{g}_{i,j}) F_{i,j}^{2})= O(\Delta_{i,j}F_{i,j}^{2}).
\end{equation}
By \eqref{eqn:thirdlevel-1-right-intermediate1},
\eqref{eqn:thirdlevel-1-right-intermediate2} and
\eqref{eqn:thirdlevel-1-right-intermediate3},
\begin{equation}
  \label{eqn:thirdlevel-1-right}
 \sum_{\eta}  I_{\ge
  k}(\cR_{i,j}(\eta),\cH) =  O\Big(  \sum_{\eta}
  \Big( \frac{e_{i,j}}{\Delta_{i,j}F_{i,j}^{2}}\Big)^{k} \Big) = O( e_{i,j}^k \Delta_{i,j} ^{1-k}F_{i,j}^{2-2k} ).
\end{equation}

We now bound the number of incidences of $\cR_{i,j}\setminus
\cR_{i,j,0}$ with hypersurfaces that contain at most $k-1$ points in
every $\cR_{i,j}(\eta)$. 
We would like to use an argument similar to those used above in the
case of first and second level partitioning, and bound, for each $H \in
\mathcal{H}$, the number of these incidences on $H$, by $k$ times the
number of connected components of $\R^{4}\setminus V(h_{i,j})$ having
nonempty intersection with $H\cap W_{i,j}(\R)$ using Theorem
\ref{thm:2}. However, there are two difficulties in this
approach. First, unlike  the prior cases,  we do not
have the equations defining $W_{i,j}$, but rather those of a possibly
larger variety $\widetilde{W}_{i,j}$. This is not a serious problem,
since clearly the number of connected components of $\R^{4}\setminus
V(h_{i,j})$, met by the possibly larger set $H\cap
\widetilde{W}_{i,j}(\R)$ is an upper bound on the number of connected
components of $\R^{4}\setminus V(h_{i,j})$ having nonempty
intersection with $H\cap W_{i,j}(\R)$. The second difficulty is more
serious. To apply Theorem \ref{thm:2} to obtain a
sufficiently good upper bound (see
\eqref{eqn:thirdlevel-1-left-intermediate1} below) we require that the
dimension of the intersection $H\cap \widetilde{W}_{i,j}$ is one (if
$H\cap \widetilde{W}_{i,j}$ is non-empty), and this requirement might
not be satisfied. To circumvent this difficulty, for each $H
\in \mathcal{H}$, we partition $W_{i,j}(\R)$ into two semi-algebraic
subsets, namely $G_{i,j}(H)$ and $B_{i,j}(H)$. The non-isolated points
of $W_{i,j}(\R)$ which belong $B_{i,j}(H)$ are points having an open
neighborhood in $W_{i,j}(\R)$ (with respect to its Euclidean topology)
which is contained also in $H$.  Such points have the bad property
that the intersection of $W_{i,j}(\R)$ with a small perturbation of
$H$ could be empty in a neighborhood of such a point.  We bound
incidences created by points in the various $B_{i,j}(H)$ using a
separate argument (see inequality \eqref{eqn:thirdlevel-01} below).

On the other hand,  to bound the incidences in $H \cap G_{i,j}(H)$, 
we show (using Proposition \ref{prop:A})  that it is possible to replace $H$ by a slightly perturbed hypersurface $\widetilde{H} \subset \R^4$ of degree
twice the degree of $H$, satisfying:
\begin{enumerate}
\item
every connected component of $\R^{4}\setminus V(h_{i,j})$ having nonempty
intersection with $H\cap G_{i,j}(H)$ also has a non-empty intersection with
$\widetilde{H} \cap \widetilde{W}_{i,j}(\R)$;
\item
the dimension of $\widetilde{H} \cap \widetilde{W}_{i,j}$ is equal to $1$ if  $\widetilde{H} \cap \widetilde{W}_{i,j} \neq \emptyset$.
\end{enumerate}

This, allows us to obtain the necessary estimate 
(see \eqref{eqn:thirdlevel-1-left-intermediate1} below)
on the number of connected components of 
$\R^{4}\setminus V(h_{i,j})$ having nonempty
intersection with $H\cap G_{i,j}(H)$ using Theorem \ref{thm:2}.
We now make the above arguments precise as follows.

Given $H\in \cH$, we denote by
$B_{i,j}(H)\subset W_{i,j}(\R)$ the semi-algebraic subset of points
$p\in W_{i,j}(\R)$ having an open neighborhood, in the Euclidean
topology of $W_{i,j}(\R)$,  contained in $H$. We also set
$G_{i,j}(H) = W_{i,j}(\R) \setminus B_{i,j}(H)$.  Notice that
unlike $B_{i,j}(H)$, the semi-algebraic set $G_{i,j}(H)$ is not
necessarily contained in $H$, and that $W_{i,j}(\R)_{0} \cap H \subset
B_{i,j}(H)$.

For any finite subset $\cR\subset W_{i,j}(\R)$ we set 
\begin{displaymath}
 \cI^{\bad}(\cR,\mathcal{H}) =  
 \bigcup_{H \in \mathcal{H}}
 \cI
 (\cR \cap
 B_{i,j}(H),\mathcal{H})  \quad \text{ and } \quad 
 \cI^{\good}(\cR,\mathcal{H}) = 
 \bigcup_{H \in \mathcal{H}}
 \cI
 (\cR \cap G_{i,j}(H),\mathcal{H}).
\end{displaymath}

We also set $I^{\bad}(\cR,\mathcal{H})=\card(
\cI^{\bad}(\cR,\mathcal{H}) )$ and
$I^{\good}(\cR,\mathcal{H})=\card(
\cI^{\good}(\cR,\mathcal{H}) )$. Clearly, 
\begin{equation*}
  I(\cR,\mathcal{H})= I^{\bad}(\cR,\mathcal{H})+I^{\good}(\cR,\mathcal{H}).
\end{equation*}

We first treat the incidences in $G_{i,j}(H)$. Write $H=V(b)$ with $b
\in \R[x_{1},x_{2},x_{3},x_{4}]$. The number of nonempty subsets of
the form $\cR_{i,j}(\eta) \cap G_{i,j}(H)$ is bounded by the number of
connected components of $\R^{4}\setminus V(h_{i,j})$ having nonempty
intersection with $H\cap G_{i,j}(H)$. By Proposition \ref{prop:A},
this number of connected components is bounded by the 
number of connected components of the semi-algebraic set
\begin{equation}
  \label{eq:11}
  (  \wt W_{i,j}\cap V(b^{2}-\varepsilon))(\R)\setminus V(h_{i,j})=V(
  b^{2}-\varepsilon, \wt
  f_{i,j}, \wt g_{i,j})(\R)\setminus V(h_{i,j})
\end{equation}
for any $\varepsilon>0$ sufficiently small. Choosing a possibly
smaller $\varepsilon>0$, we also have that 
$ \dim(\wt W_{i,j}\cap V(b^{2}-\varepsilon))=
1$
if $\wt W_{i,j}\cap V(b^{2}-\varepsilon) \neq \emptyset$.
To see this, observe that the set of critical values of $b^2$
restricted to $\mathrm{reg}(\wt W_{i,j})$ is finite using Sard's
theorem \cite[page 255]{Loj}, and hence, for all
$\varepsilon>0$  small enough, $\varepsilon$ is a regular value of
$b^2$ restricted to $\mathrm{reg}(\wt W_{i,j})$. It follows that
either $\wt W_{i,j}\cap V(b^{2}-\varepsilon)$ is empty, or $ \dim(\wt
W_{i,j}\cap V(b^{2}-\varepsilon)) =1$, using the implicit function
theorem (see for example \cite[page 19]{Griffiths-Harris}).

For any such 
choice of $\varepsilon$, by
Theorem \ref{thm:2} and \eqref{eq:12}, the number of connected
components of the semi-algebraic set in \eqref{eq:11} is bounded by
\begin{equation}
\label{eqn:thirdlevel-1-left-intermediate1}
 O(  \deg(b^{2}-\varepsilon)\deg(\wt
  f_{i,j})\deg(\wt g_{i,j}) \deg(h_{i,j}) )= 
  O(\Delta_{i,j} F_{i,j}).
 \end{equation}
Thus
\begin{equation}
  \label{eqn:thirdlevel-1-left}
  \sum_{\eta}I_{<k}^\good(\cR_{i,j} (\eta),\cH) = O(n \Delta_{i,j} F_{i,j}),
\end{equation}
where 
\[
I_{<k}^\good(\cR_{i,j} (\eta),\cH) = \card(\cI^\good(\cR_{i,j} (\eta),\cH) \cap \cI_{<k}(\cR_{i,j} (\eta),\cH)).
\]

Gathering together \eqref{eqn:thirdlevel-1-right} and \eqref{eqn:thirdlevel-1-left}, 
we obtain that
\begin{multline}
\label{eqn:thirdlevel-intermediate1}
 I^{\good}(\cR_{i,j}\setminus \cR_{i,j,0}, \cH)=\sum_{\eta} I^{\good}(\cR_{i,j}(\eta),
\cH) \\ = O\big( n
\Delta_{i,j} F_{i,j} +e_{i,j}^k \Delta_{i,j}^{1-k}F_{i,j}^{2-2k} \big).
\end{multline}

We  set
\begin{equation*}
F_{i,j}= \max\Big(24 E_i,
\Big(\frac{e_{i,j}}{\Delta_{i,j}}\Big)^{\alpha_3} \frac{1}{n^{\beta_3}}\Big)
\quad \text{ with }  \alpha_3 = \frac{k}{2k -1} \text{ and } \beta_3 =\frac{1}{2 k -1}.
\end{equation*}

If $F_{i,j} = 24 E_i $, then
$(\frac{e_{i,j}}{\Delta_{i,j}})^{\alpha_3} {n^{-\beta_3}}=
O(E_{i})$. In this case, the first term in the right-hand side of
\eqref{eqn:thirdlevel-intermediate1} controls the second one and,
otherwise, both terms are equal up to a constant factor. Hence, 
\begin{equation}
\label{eqn:thirdlevel-intermediate3}
 I^\good(\cR_{i,j}\setminus \cR_{i,j,0}, \cH)=
  \begin{cases}
    O(n
    \Delta_{i,j}E_i)& \text{ if } F_{i,j} = 24 E_i,\\
    O(n^{1-\beta_{3}} e_{i,j}^{\alpha_{3}} \Delta_{i,j}^{1-\alpha_{3}})& \text{ otherwise}.
  \end{cases}
\end{equation}
By 
\eqref{eq:10}
and B\'ezout's inequality,
\begin{equation}
  \label{eqn:thirdlevel-intermediate4}
  \sum_{i,j}n
  \Delta_{i,j}E_i \le  \sum_{i} nD_{i}E_{i}^{2} = O(m^{1 - \frac{k-1}{4 k
      -1}} n ^{1 - \frac{3}{4k -1}}+n),
\end{equation}
as shown when passing from \eqref{eqn:secondlevel-left-intermediate4}
to \eqref{eqn:secondlevel}.
Else, applying the  H\"older inequality together with  \eqref{eqn:thirdlevel-aggregrate1} and 
\eqref{eqn:secondlevel-bezout},
\begin{multline}
\label{eqn:thirdlevel-intermediate5}
\sum_{i,j}  n^{1-\beta_{3}} e_{i,j}^{\alpha_{3}}
\Delta_{i,j}^{1-\alpha_{3}} \le n^{1-\beta_{3}}\Big( \sum_{i,j}   e_{i,j}\Big)^{\alpha_{3}}
\Big( \sum_{i,j} \Delta_{i,j}\Big)^{1-\alpha_{3}} \\ \le 
 n^{1-\beta_{3}} m^{\alpha_{3}}
\Big( \sum_{i} D_{i}E_{i}\Big)^{1-\alpha_{3}}.
\end{multline}

Recall that $E_{i}= \max\big(24D_i,
\big(\frac{l_i}{D_{i}}\big)^{\alpha_2} {n^{-\beta_2}}\big)$ as in
\eqref{eqn:secondlevel-left-intermediate6}. Hence
\begin{align}  
\sum_{i} D_{i}E_{i} &= O\Big( \sum_i D_i^2 + {n^{-\beta_2}} \sum_{i} 
{l_i}^{\alpha_2}{D_{i}}^{1-\alpha_2} \Big) \nonumber \\ 
&= O\Big( \sum_i D_i^2 + {n^{-\beta_2}} \Big(\sum_{i}
{l_i}\Big)^{\alpha_2} \Big(\sum_{i}{D_{i}}\Big)^{1-\alpha_2} \Big)
\nonumber \\ 
& =O\big( D^{2}+{n^{-\beta_2}} m^{\alpha_2} D^{1-\alpha_2} \big) . \label{eqn:thirdlevel-intermediate6}
\end{align}
Recall also that $D = \max\big(24, {m^{\alpha_1}}{n^{-\beta_1}}\big)$
as in \eqref{eqn:firstlevel-D}.  If $D=24$, then  $m^{k}=O(n)$.  In this case,
$\sum_i D_i E_i = O(1)$. Otherwise, substituting $D =m^{\alpha_1}
n^{-\beta_1}$ in \eqref{eqn:thirdlevel-intermediate6} and the sum
$\sum_{i}D_{i}E_{i}$ into \eqref{eqn:thirdlevel-intermediate5},
\begin{align}
   n^{1-\beta_{3}} m^{\alpha_{3}}
\Big( \sum_{i} D_{i}E_{i}\Big)^{1-\alpha_{3}} 
& =   O\big( n^{1-\beta_{3}} m^{\alpha_{3}} ({n^{-\beta_2}} m^{\alpha_2}
(m^{\alpha_1}  n^{-\beta_1})^{1-\alpha_2})^{1-\alpha_{3}}\big)
\nonumber \\
& =O\big( m^{1 - \frac{k-1}{4 k -1}} n^{1 - \frac{3}{4 k-1}} \big).
  \label{eqn:thirdlevel-intermediate8}
\end{align}

It follows from \eqref{eqn:thirdlevel-intermediate3},
\eqref{eqn:thirdlevel-intermediate4},
\eqref{eqn:thirdlevel-intermediate5},
\eqref{eqn:thirdlevel-intermediate6}
and\eqref{eqn:thirdlevel-intermediate8}, that
\begin{align}
\nonumber
I^{\good}\Big(\bigcup_{i}\cQ_{i,0} \setminus \bigcup_{i,j}\cR_{i,j,0},\cH \Big) & =
\sum_{i,j} I^\good(\cR_{i,j}\setminus \cR_{i,j,0},\cH)  \\
&= O\Big( 
\sum_{i,j}
\Big( n
\Delta_{i,j} F_{i,j} +e_{i,j}^k \Delta_{i,j}^{1-k}F_{i,j}^{2-2k}  \Big)
\Big) \label{eq:14}
\\
&=O\big(
m^{1 - \frac{k-1}{4 k -1}} n^{1 - \frac{3}{4 k-1}}+n\big). \label{eqn:thirdlevel}
\end{align}

Finally, we treat the incidences in $B_{i,j}(H)$.  We claim that for
each $p \in \cR_{i,j} \setminus W_{i}(\R)_{0}$ there are at most 3
hypersurfaces in $\cH$ such that $p\in B_{i,j}(H)$.  To see this,
observe that $p \in B_{i,j}(H)$ implies that $H$ contains an open
neighborhood $U\subset W_{i,j}(\R)$ of $p$. Since $p$ is not an
isolated point of $W_{i,j}(\R)$, if follows that $U$ is of real
dimension at least $1$. The claim then follows from the hypothesis
\eqref{item:5}.

Hence,
\begin{equation}
\label{eqn:thirdlevel-01}
I^\bad(\cR_{i,j} \setminus W_{i}(\R)_{0},\mathcal{H}) \leq 3 e_{i,j}.
\end{equation}

The incidences of $\cQ_{i}$ with hypersurfaces $H\in\cH$ containing
$V_{i}$ are already accounted for in
\eqref{eqn:secondlevel-left-intermediate3}. Hence, we can suppose that
$V_{i}$ is not contained in $H$.  In this case, by Theorem
\ref{thm:2}, $\card(H \cap W_i(\R)_{0}) \le \betti(V(b,f_{i},g_{i}))=
O(D_i E_i^2)$, where $b$ is the polynomial defining $H$.  Together with
\eqref{eqn:secondlevel-left-intermediate3}, this implies that
\begin{equation}
\label{eqn:thirdlevel-00}
\sum_{j} I^\bad(\cR_{i,j} \cap W_{i}(\R)_{0},\mathcal{H}) = O(n D_i
E_i^2 +l_{i})
\end{equation}
It follows from  \eqref{eqn:thirdlevel-01}, \eqref{eqn:thirdlevel-aggregrate1} and
\eqref{eqn:thirdlevel-00} that
 \begin{equation*}
\sum_{j} I^\bad(\cR_{i,j},\mathcal{H}) = 
O(n D_i E_i^2 + l_{i} ) + O(l_{i,0}) =
O(n D_i E_i^2 + l_{i} ).
\end{equation*}
This bound already appears in
\eqref{eqn:secondlevel-left-intermediate4}. The contribution of the
sum of these terms over $i\in I$ is accounted for in \eqref{eqn:secondlevel} and can be absorbed into the bound
\eqref{eqn:thirdlevel}, after adding the term $m$.
We conclude that
\begin{equation}
  \label{eq:35}
I\Big(\bigcup_{i}\cQ_{i,0} \setminus
\bigcup_{i,j}\cR_{i,j,0},\cH \Big)  =O\big(
m^{1 - \frac{k-1}{4 k -1}} n^{1 - \frac{3}{4 k-1}}+m+n\big).
\end{equation}

\subsubsection*{The case of curves and conclusion of the proof}
Finally, we bound the number of incidences that occur on the curves
$Y_{i,j}=W_{i,j}\cap V(h_{i,j})$. 

For each $i,j$, set $\cR_{i,j,0}=\cR_{i,j}\cap Y_{i,j}$. Let
$Y_{i,j}=\bigcup_{l\in L_{i,j}}Y_{i,j,l}$ be the decomposition of
$Y_{i,j}$ into irreducible components and consider an arbitrary partition
of $\cR_{i,j,0}$ into disjoint subsets $\cS_{i,j,l}$, $l\in L_{i,j}$,
with $\cS_{i,j,l}\subset Y_{i,j,l}$ for all $l$.

Let $l\in L_{i,j}$ and $H\in \cH$. If $Y_{i,j,l}$ is not contained in
$H$, then the number of incidences between $\cS_{i,j,l}$ and this
hypersurface is bounded by $\card(Y_{i,j,l}\cap H)$. From B\'ezout's
inequality, we deduce that
\begin{equation}
  \label{eqn:fourthlevel-intermediate1}
  I(\cS_{i,j,l},\{H\})\le 
  \begin{cases}
 \deg(H)\deg(Y_{i,j,l}) & \text{ if } Y_{i,j,l}\not\subset H, \\
\card (\cS_{i,j,l}) & \text{ if } Y_{i,j,l}\subset H.
  \end{cases}
\end{equation}
The hypothesis \eqref{item:5} implies that, for each $l$, there are at most $3$ hypersurfaces
in $\cH$ containing $Y_{i,j,l}$. It follows from
\eqref{eqn:fourthlevel-intermediate1} that
\begin{align}
  I(\cR_{i,j,0}, \cH)& = \sum_{l\in L_{i,j}}\sum_{H\in \cH}
  I(\cS_{i,j,l},\{H\}) \nonumber \\ & = O\Big( \sum_{l,H} \deg(Y_{i,j,l})\Big)
  + 3 \sum_{l} \card (\cS_{i,j,l}) \nonumber \\ &= O( n \deg(Y_{i,j}) +
  \card (\cR_{i,j,0})) .   \label{eq:38}
\end{align}
By B\'ezout's inequality,  $\deg(Y_{i,j}) \le \Delta_{i,j}F_{i,j}$.
Using \eqref{eq:38},
\begin{equation}
  \label{eq:fourthlevel:intermediate3}
  I\Big(\cP \cap  \bigcup_{i,j} Y_{i,j},\cH\Big) = \sum_{i,j}
  I(\cR_{i,j,0}, \cH) = O\Big( \sum_{i,j}n \Delta_{i,j}F_{i,j} + \sum_{i,j}\card(\cR_{i,j,0})\Big).
\end{equation}
The first sum in the right-hand side of
\eqref{eq:fourthlevel:intermediate3} appears in \eqref{eq:14}
and is already accounted for in \eqref{eqn:thirdlevel}.  By
construction, the family of sets $\{\cR_{i,j,0}\}_{i,j}$ is a
partition of $\cP \cap \bigcup_{i,j} Y_{i,j}$. Therefore, the sum of
their cardinalities is bounded by $m$. Hence,
\begin{equation}
\label{eqn:fourthlevel}
  I\Big(\cP \cap  \bigcup_{i,j} Y_{i,j},\cH\Big)  = O\big( m^{1 - \frac{k-1}{4 k -1}} n^{1 - \frac{3}{4 k-1}}+m+n \big).
\end{equation}

The statement now follows by summing up the contributions from
\eqref{eqn:firstlevel}, \eqref{eqn:secondlevel}, \eqref{eq:35} and
\eqref{eqn:fourthlevel}.
\end{proof}


We close this paper by proposing the next conjecture  on the number of
point-hypersurfaces incidences in higher dimension. 

\begin{conjecture}
\label{conj:main-conjecture}
Let $d,k,c \geq 1$, and let $\mathcal{P}$ be a finite set of points of
$\R^{d}$ and $\cH$ a finite set of hypersurfaces of $\C^d$ satisfying
the following conditions:
\begin{enumerate}
\item \label{item:7} the degrees of the hypersurfaces in $\cH$ are bounded
  by $c$; 
\item \label{item:8} the intersection of any family of $d$ distinct
  hypersurfaces in $\cH$ is finite;
\item \label{item:9} for any subset of $k$ distinct points in $\cP$, the
  number of hypersurfaces in $\cH$ containing them is bounded by $c$.
\end{enumerate}
Set  $m=\card(\mathcal{P})$ and $n=\card(\cH)$. Then
\[
I(\mathcal{P},\cH) = O_{d,k,c}(m^{1-\frac{k-1}{d k-1}} n^{1 - \frac{d-1}{d k-1}} +m +n).
\]
\end{conjecture}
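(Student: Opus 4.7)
The plan is to mimic the three-level argument used to prove Theorem \ref{thm:main}, but with $d-1$ levels of polynomial partitioning on a descending chain of irreducible subvarieties of codimensions $0, 1, 2, \dots, d-2$, finishing with a Bézout-style argument on curves. Set the target bound $T(m,n) = m^{1 - \frac{k-1}{dk-1}} n^{1 - \frac{d-1}{dk-1}} + m + n$.

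At the first level I would apply Theorem \ref{thm:Guth-Katz} to produce a polynomial $f$ of degree $D_{1} = \max(c_{0}, m^{\alpha_{1}} n^{-\beta_{1}})$ with $\alpha_{1} = k/(dk-1)$, $\beta_{1} = 1/(dk-1)$, and split incidences into two types. For each sign-pattern cell $\cP(\gamma)$ of size $O(m/D_{1}^{d})$, incidences with hypersurfaces meeting $\cP(\gamma)$ in at least $k$ points are bounded using hypothesis \eqref{item:9} by $O(m_{\gamma}^{k})$, summing over the $O(D_{1}^{d})$ cells (by Theorem \ref{thm:2}) to give $O(m^{k} D_{1}^{d-dk})$. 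Incidences with hypersurfaces meeting each $\cP(\gamma)$ in fewer than $k$ points are bounded, per $H \in \cH$, by $O(D_{1}^{d-1})$ cells times $(k-1)$, giving $O(nD_{1}^{d-1})$. The choice of $D_{1}$ balances these and yields $O(T(m,n))$ modulo the points on $V(f)$.

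For the $i$-th intermediate level ($2 \le i \le d-1$) I would decompose the previous partitioning subvariety into irreducibles $V_{i,j}$, distribute the leftover points $\cR_{i,j}$ among them, and apply Conjecture \ref{conj:4} on each $V_{i,j}$ with partitioning degree $F_{i,j} = \max(c\, \delta(V_{i,j}), (e_{i,j}/\deg V_{i,j})^{\alpha_{i}} n^{-\beta_{i}})$ where $\alpha_{i} = k/((d-i+1)k-1)$ and $\beta_{i} = 1/((d-i+1)k-1)$. The high-multiplicity incidences on a cell of size $O(e_{i,j}/(\deg V_{i,j}\, F_{i,j}^{d-i}))$ are handled as before. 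For the low-multiplicity incidences I would separate, as in the third level of the proof of Theorem \ref{thm:main}, the \emph{bad} locus where $H$ contains a Euclidean neighborhood of the point in $V_{i,j}$ (handled by hypothesis \eqref{item:8}, which bounds the number of such $H$ by $d-1$) from the \emph{good} locus, where a Proposition \ref{prop:A}-type perturbation $V(b^{2}-\varepsilon)$ reduces the cell count to the number of connected components of a semi-algebraic set in a variety of codimension $i$; this count should be $O(\deg V_{i,j}\, F_{i,j}^{d-i})$, which is precisely the content of Conjecture \ref{conj:3}. Balancing the exponents and summing via Hölder's inequality, using $\sum_{j} \deg(V_{i,j}) \le \prod_{h < i} F_{h,\cdot}$ from Bézout, reproduces the previous-level bound $O(T(m,n))$ at each level.

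The final level deals with curves $Y_{i,j}$ obtained at level $d-1$: incidences with a hypersurface $H$ not containing $Y_{i,j}$ are bounded by $h \deg(Y_{i,j})$, and by hypothesis \eqref{item:8} at most $d-1$ hypersurfaces contain any given $Y_{i,j}$. The sum $\sum n\,\deg(Y_{i,j})$ telescopes into terms already present at the $(d-1)$-th level, so after adding the trivial contribution $m$ from the containment case, one gets $O(T(m,n))$. Summing all $d-1$ levels and the curve contribution yields the claimed bound. The principal obstacle is not the combinatorial bookkeeping but the two unresolved ingredients Conjectures \ref{conj:4} and \ref{conj:3}: without the partitioning theorem producing polynomials of degree controlled by $\delta(V)$ rather than $\deg(V)$, and without the component bound scaling with $\deg(V)$ rather than $\delta(V)^{d-\dim V}$, the inductive balance $\alpha_{i} + (d-i)\beta_{i} = 1/(\cdots)$ degrades and the telescoping of Bézout products fails to recover $T(m,n)$ at each level. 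The codimension-two case of Theorem \ref{thm:main} worked precisely because both conjectures hold there by Proposition \ref{prop:1} and Theorem \ref{thm:2}.
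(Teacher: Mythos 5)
The statement you are asked to prove is stated in the paper as an open conjecture (Conjecture \ref{conj:main-conjecture}); the paper offers no proof of it, only the remark that establishing Conjectures \ref{conj:4} and \ref{conj:3} would be ``an important step'' toward proving it via polynomial partitioning. Your proposal is therefore not a proof: its two essential ingredients at every intermediate level --- the partitioning theorem on irreducible varieties of arbitrary codimension with degree threshold $O(\delta(X))$ and cell size $O(\card(\cP)/(\deg(X)\ell^{\dim X}))$ (Conjecture \ref{conj:4}), and the connected-component bound $O_{d}(\deg(X)\,\ell^{\dim X})$ in terms of the degree of the variety rather than a B\'ezout number (Conjecture \ref{conj:3}) --- are precisely the open problems the paper isolates, and you acknowledge as much in your final paragraph. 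A conditional sketch that invokes unproven conjectures is a genuine gap, not a proof; in the codimension-two case of Theorem \ref{thm:main} these inputs were available unconditionally through Proposition \ref{prop:1} (two coprime equations with $\deg(f_1)\deg(f_2)=O(\deg X)$) and the Barone--Basu bound (Theorem \ref{thm:2}), and no analogue of Proposition \ref{prop:1} is known in codimension three or more.

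Beyond the conditionality, two further points in your sketch are asserted rather than established. First, the ``good locus'' step needs more than the literal statement of Conjecture \ref{conj:3}: the quantity that actually arises (as in Proposition \ref{prop:A} and display \eqref{eq:11} of the paper) is the number of connected components of a set of the form $(\wt{Y}(\R)\cap V(b^{2}-\eps))\setminus V(h)$, i.e.\ a real algebraic set cut by an auxiliary perturbation hypersurface and then by the partitioning polynomial, with the bound required to scale as $\deg(Y)\cdot F^{\dim Y -1}$ (one degree of $F$ lower than the cell count). In the paper this is obtained from Theorem \ref{thm:2} applied to the explicit equations $b^{2}-\eps,\wt f_{i,j},\wt g_{i,j},h_{i,j}$; for codimension $\ge 3$ you would need a strengthened, ``relative'' version of Conjecture \ref{conj:3}, which you do not formulate. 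Second, the claim that the H\"older/B\'ezout telescoping reproduces $T(m,n)$ at every one of the $d-1$ levels is exactly the delicate exponent bookkeeping that the paper carries out line by line for $d=4$ (displays \eqref{eqn:secondlevel-intermediate3}--\eqref{eqn:secondlevel-intermediate4} and \eqref{eqn:thirdlevel-intermediate5}--\eqref{eqn:thirdlevel-intermediate8}); in general one must also control the sums $\sum_j \delta(V_{i,j})$ entering the degree thresholds of Conjecture \ref{conj:4}, since the partitioning degree at level $i$ must dominate $c\,\delta(V_{i,j})$ and these thresholds accumulate into the total degree budget --- your inequality $\sum_j\deg(V_{i,j})\le\prod_{h<i}F_{h,\cdot}$ does not by itself control $\delta$ of the components. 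So even granting both conjectures, the proposal is a plausible roadmap (essentially the one the authors themselves envisage) rather than a proof.
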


This conjecture is suggested by the bound that follows from the first
level of the polynomial partitioning method applied to this problem.
It contains the statements of the Szemer\'edi-Trotter theorem
\ref{thm:szemeredi-trotter}, the results of Zahl and Kaplan,
Matou{\v{s}}ek, Sharir and Safernov\'a in three dimensions
\cite{Zahl:ibnpsitd,KMSS:udtd}, and Theorem \ref{thm:main}.

Concurrently with this paper, a proof of a weaker version of this conjecture,  with an extra factor of $m^\eps$
in the bound, has appeared in \cite{Sheffer-et-al}.

\bibliographystyle{amsalpha} \bibliography{biblio}

\end{document}